\newtheorem{theorem}{Theorem}[section]
\newtheorem{lemma}[theorem]{Lemma}
\newtheorem{definition}[theorem]{Definition}
\newtheorem{corollary}[theorem]{Corollary}
\newtheorem{rem}[theorem]{Remark}
\newcommand{\eps}{\varepsilon}
\newcommand{\vphi}{\varphi}
\theoremstyle{plain}
\begin{document}
\title{Random  zero sets  for Fock type spaces }

\author{
Anna Kononova
}

\maketitle

{
\noindent{St.~Petersburg State University, Department of Mathematics and Mechanics,\\ 198504, Universitetsky pr., 28, Stary Peterhof, Russia}

\medskip
\noindent
{{\it Current address}:\\
School of Mathematical Sciences, Tel Aviv University, Tel Aviv, Israel}

\medskip
\noindent
{\tt anya.kononova@gmail.com}}

\medskip
\noindent
{ORCID: 0000-0002-4482-0540}

\abstract{Given a nondecreasing sequence $\Lambda=\{\lambda_n>0\}$ such that $\displaystyle\lim_{n\to\infty} \lambda_n=\infty,$ we consider the sequence $\mathcal N_\Lambda:=\left\{\lambda_ne^{i\theta_n},n\in\,\mathbb N\right\}$, where $\theta_n$ are independent random  variables   uniformly distributed  on $[0,2\pi].$ We discuss the conditions on the sequence $\Lambda$ under which  $\mathcal N_\Lambda$ is  a zero set  (a uniqness set) of a given weighted Fock space almost surely. The critical density of the sequence $\Lambda$ with  respect to the weight is found.

}
\medskip

\noindent{\bf Keywords:} entire function, Fock space, zero set, random rotations.  

\medskip

\section{Introduction}
Let $\varphi:\mathbb R_+\to \mathbb R_+$ be a positive increasing 
function such that $$\lim_{t\to\infty}\varphi(t)=\infty.$$
An entire function $f$ is said to belong to the Fock type space $\mathcal F_\varphi^p$, where $0< p<\infty,$ if
\[\|f\|^p_{p,\varphi}:=
\int_{\mathbb C}\left|f(z)\right|^p
e^{-p\varphi(|z|)} {\rm d}m(z)<\infty
,
\]
where $m$ is  Lebesgue measure on $\mathbb C$.
The Fock spaces corresponding to the function  $\varphi(r)=r^2/2$ will be  called the {\it classical Fock spaces} and denoted by~$\mathcal F^p$.

A   sequence  $\mathcal N=\{z_k\in \mathbb C\}$ is called a {\it zero set} of the space $\mathcal F^p_\varphi$   if there exists a nonzero function  $f\in\mathcal F^p_\varphi$ whose zeroes are exactly the points from the sequence $\mathcal N$.
A sequence $\mathcal N$ is a {\it uniqueness set} of $\mathcal F^p_\varphi$, if the only function that vanishes at $\mathcal N$ is the zero function.

Let  $\Lambda=\{\lambda_n\}_{n\in\mathbb N}$ be a nondecreasing sequence of positive numbers such that $\lim \lambda_n =\infty.$
{ \begin{definition}
The  random sequence $\mathcal N_\Lambda$, obtained by rotating each element $\lambda_n$ of the sequence $ \Lambda$ by a random angle: $$\mathcal N_\Lambda:=\left\{z_n=\lambda_ne^{i\theta_n}: n\in\,\mathbb N=\{1,2,3,\ldots\}\right\},$$ where $\theta_n$ are independent random  variables   uniformly distributed  on $[0,2\pi],$   will be called the randomization of  $\Lambda$.
\end{definition}
}

In this note, we will be concerned with the following question: {\it  under which conditions on  $\Lambda$ the  random sequence $\mathcal N_\Lambda$  is a zero set of the Fock space $\mathcal F^p_\varphi$  almost surely?}

Speaking of a similar question for  other spaces of analytic functions, let us start with the  Hardy spaces $H^p(\mathbb D).$ Here  the situation is fully described by the Blaschke condition: the sequence $\mathcal N=\{z_n: |z_n|<1\}$ is a zero set of the space $H^p(\mathbb D)$ if and only if
$\sum (1-|z_n|)<\infty$, for any $p\in (0,\infty)$.
The situation with the Bergman spaces 
$A^p(\mathbb D)$ occurs to be quite different, as was originally shown in 1974 by Horowitz  \cite{Horowitz'74}.
The random approach to this question was firstly considered in 1990 by LeBlanc  \cite{LeBlanc}, who obtain a sufficient condition for the     set $\mathcal N_\Lambda$ with random angles  to be almost surely a  zero set of $A^2(\mathbb D)$, using the Blaschke-type product introduced earlier by Horowitz. Later this result was improved and extended by Bomash \cite{Bomash} and Horowitz \cite{Horowitz'94}.

Turning to the Fock spaces, we refer to the book \cite{Zhu} by Zhu for some general results and to the paper by Lyons and Zhai \cite{LyonsZhai}, where a discussion of the zero sets for Bergman and Fock spaces can be found.

{ While
it is a long standing problem to characterize deterministic zero sets of the Fock space, the probabilistic approach has some advantages.
}

Recently X.Fang and P.T.Tien in \cite{FT} found a sufficient condition on the sequence $\Lambda$ under which the sequence $\mathcal N_\Lambda$ is almost surely a zero set of the classical Fock space $\mathcal F^p_{\alpha t^2/2}$.
{In particular, they showed that if $\Lambda=\{\lambda_n\}$, where $\lambda_n\sim c\sqrt n$ as $n\to \infty$, then for all $\alpha> \frac{16}{(\sqrt {15} -3)c^2}$  and $p>0$ the randomized sequence $\mathcal N_\Lambda$ is a zero set of $ \mathcal F^p_{\alpha t^2/2}$ almost surely, while for $\alpha <1/c^2$ and $p>0$ the randomized sequence is  a uniqueness set for $\mathcal F^p_{\alpha t^2/2}$.  Since the bounds in these conditions do not match, the question of the critical value has been raised in the paper.
}

 It is  also worth  mentioning the paper \cite{CLP} by  Chistyakov, Lyubarskii and Pastur,  who dealt with another kind of randomization and showed that  a
random perturbation of the lattice $a(\mathbb Z+i\mathbb Z)=\{am + ian, m, n \in\mathbb Z\}$ is a.s. a zero set of the classical
Fock space if $a > \frac1{\sqrt \pi}$, and not a zero set if $a < \frac1{\sqrt \pi}$, under some natural conditions on the perturbation.

We will use the following notation and terminology.
\begin{definition}\label{n(t)}By $n(t)$ we denote the number of the elements of the sequence $\mathcal N$ in the open disk of radius $t$:  $$n(t)=\#\left\{z_n\in\mathcal N: |z_n|< t\right\}.$$
\end{definition}

\begin{definition}
Given an increasing differentiable function $\varphi:\mathbb R_+\to \mathbb R_+$ and a sequence $\mathcal N=\{z_n\in\mathbb C\}$ such that  $$\lim_{t\to\infty}\frac{n(t)}{t\varphi'(t)}=A,$$
we say that  $\mathcal N$ is 
\begin{itemize}
\item of {\bf critical density} with respect to the Fock space $\mathcal F^p_\varphi$, if $A=1$;
\item of {\bf subcritical density} with respect to the Fock space $\mathcal F^p_\varphi$, if $A<1;$
\item of {\bf supercritical density} with respect to the Fock space $\mathcal F^p_\varphi$, if $A>1.$
\end{itemize}
\end{definition}

The results of the present  paper are divided into three  parts. In Theorem \ref{Th1} we propose a (nonrandom, Jensen type) sufficient condition for the sequence $\mathcal N$  to be a uniqueness set of the Fock space $\mathcal F^p_{\varphi}$. In particular, it yields that any sequence $\mathcal N$ of supercritical density with respect to $\mathcal F^p_\varphi$ is not a zero set of this space. Some examples of the uniqueness sets with critical density are also given.

In  Theorem \ref{Th2} we show that, provided some  regularity of $\varphi$, for subcritical sequences $\Lambda$, the  randomization  $\mathcal N_\Lambda$ is a zero set of the Fock space $\mathcal F^p_\varphi$  almost surely.
In particular, it follows that for the sequence $\Lambda = \{\lambda_n\}$ such that $\displaystyle\lim_{n\to\infty}\frac{\lambda_n}{\sqrt{n}}>1$, the random sequence $\mathcal N_\Lambda$ is almost surely  a zero set for the classical Fock space $\mathcal F^p$, { which settles the question on the mismatching bounds  in \cite{FT}}.

The critical case is more subtle, and  we cannot  determine whether the random sequence $\mathcal N_\Lambda$ is a  zero set almost surely in terms of the asymptotic behavior of $n(t)$.
In Theorem \ref{Th3} we consider only the classical Fock space $\mathcal F^p$, and give  examples of  random a.s. zero sets with critical density with respect to the space $\mathcal F^p.$

In particular, it follows from our results that
\begin{itemize}
\item[1)] if $\Lambda\sim a \sqrt n$, then the sequence $\mathcal N_\Lambda$ is
\begin{itemize}
\item[(a)] a zero set of $\mathcal F^2$ for $a>1$ almost surely;
\item[(b)] a uniqueness set of $\mathcal F^2$ for $a<1$;
\item[(c)] a uniqueness set for $\lambda_n=\sqrt {n+\alpha}, \;\;\alpha\le 1/2$.
\end{itemize}
\item[2)] if $\Lambda$ is a  nondecreasing sequence consisting of the moduli of  all elements of the lattice $\mathbb Z+i\mathbb Z$:
 $$\Lambda= \sqrt{a\pi}\bigl|\mathbb Z+i\mathbb Z\bigr|=\sqrt{a\pi}\left\{0, 1, 1, 1, 1, \sqrt2,\sqrt2,\cdots\right\},$$ then the sequence $\mathcal N_\Lambda$ is
\begin{enumerate}
\item[(a)] a zero set of $\mathcal F^2$ for $a>1$ almost surely;
\item[(b)] a uniqueness set of $\mathcal F^2$ for $a<1$;
\item[(c)] {{\it Open question:} is $\mathcal N_\Lambda$ a uniqueness set in the critical case $a=1$?}  
\end{enumerate}
\end{itemize}

For the convenience of the reader, we add an Appendix at the end of the paper, which collects the results used in the proofs. 

\subsection* { Acknowledgements}
I am sincerely grateful to Alexander Borichev, who suggested the question studied in this paper and on several occasions helped with the proofs.
I would also like to express my gratitude to Evgeny Abakumov and Mikhail Sodin, who have read preliminary version of this work and made helpful comments. I'm also grateful to the referee, who carefully read the paper and made many remarks and suggestions, which helped to improve the exposition.

 The main results of this work were obtained while the author was supported by {\it Russian Science Foundation grant No. 20-61-46016}. On the final stage of preparation of this paper the author 
was supported by {\it
Israel Science Foundation Grant 1288/21}.

\section[Uniqueness set]{Sufficient  condition for the uniqueness set of the Fock space}\label{J}

In this section we give a (nonrandom) condition on any sequence $\Lambda$ sufficient for the sequence $\mathcal N$ such that $|z_n|=\lambda_n, n\in\mathbb N,$
to be a uniqueness set of $\mathcal F_{\varphi}^p.$
The following theorem  is an immediate consequence of Jensen formula and Jensen inequality. 

\begin{theorem}\label{Th1}
Let  $ g:\mathbb R_+\to (0,1]$ be  a  function such that $(\log g+\varphi)$ is  absolutely continuous, and
let $$\displaystyle
\int_1^\infty g^p(t){\rm d}t=\infty.$$
 A sequence $\displaystyle\mathcal N =\{z_n:\;|z_n|=\lambda_n,\;\forall n\in \mathbb N\}$ is a uniqueness  set  for the Fock space $\mathcal F^p_\varphi$, 
 provided that  there exists $M>0$ such that  
\begin{align}
 { n(t)}\ge t\cdot\left(\log g(t)+{\varphi(t)}\right)'-\frac 1p,\;\;\forall t\ge M.
\end{align}

\end{theorem}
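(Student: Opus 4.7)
The plan is to argue by contradiction: suppose $f\in\mathcal F^p_\varphi$ is not identically zero and vanishes on every point of $\mathcal N$. After dividing by $z^k$ (where $k$ is the order of vanishing of $f$ at the origin), I may replace $f$ with an entire function in $\mathcal F^p_\varphi$ that does not vanish at $0$ and still vanishes on $\mathcal N$; this is legitimate because $0\notin\mathcal N$ (the $\lambda_n$ are positive) and because $|f/z^k|\le|f|$ for $|z|\ge 1$, while near the origin $f/z^k$ is entire hence bounded. Let $n_f(t)$ denote the zero counting function of the modified $f$; since $f$ still vanishes on $\mathcal N$, we have $n_f(t)\ge n(t)$ for all $t$.

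The central inequality combines Jensen's formula with the convexity inequality for $x\mapsto e^{px}$ applied to $u=|f(te^{i\theta})|$:
\[\log|f(0)|+\int_0^t\frac{n_f(s)}{s}\,ds = \frac1{2\pi}\int_0^{2\pi}\log|f(te^{i\theta})|\,d\theta \le \frac1p\log\!\left(\frac1{2\pi}\int_0^{2\pi}|f(te^{i\theta})|^p\,d\theta\right).\]
Next, I substitute $n_f(s)\ge n(s)\ge s(\log g(s)+\varphi(s))'-\tfrac1p$ valid for $s\ge M$ and integrate in $s$ from $M$ to $t$. Absolute continuity of $\log g+\varphi$ makes the derivative integrate to $\log g(t)+\varphi(t)$, up to the boundary term $\log g(M)+\varphi(M)$, while the $-\tfrac1p$ contributes $-\tfrac1p\log(t/M)$. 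Collecting all $t$-independent data into a single constant and exponentiating both sides yields, for $t\ge M$,
\[g^p(t)\,e^{p\varphi(t)} \le C\cdot t\cdot\int_0^{2\pi}|f(te^{i\theta})|^p\,d\theta.\]

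Finally, I multiply by $e^{-p\varphi(t)}$ and integrate in $t$ on $[M,\infty)$. The right-hand side becomes, by Fubini in polar coordinates, at most $C\|f\|^p_{p,\varphi}<\infty$, while the left-hand side is $\int_M^\infty g^p(t)\,dt=\infty$ (since $g\le 1$ forces $\int_1^M g^p<\infty$, and $\int_1^\infty g^p=\infty$ by assumption). This contradiction forces $f\equiv 0$. The only piece of bookkeeping that is not entirely routine is tracking the role of the $-1/p$ correction in the hypothesis: it is precisely what produces the $-\log t$ term on the left-hand side, which after exponentiation cancels the Jacobian factor $t$ from polar coordinates and leaves $g^p(t)$ with the sharp exponent inside the divergent integral; any weaker correction would leave a slack in the final integration and prevent the contradiction.
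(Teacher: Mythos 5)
Your argument is correct and follows essentially the same route as the paper: Jensen's formula combined with the convexity (Jensen) inequality for $e^{px}$, substitution of the hypothesis $n(t)\ge t(\log g+\varphi)'-1/p$, integration and exponentiation, and then a polar-coordinate comparison to $\|f\|_{p,\varphi}^p$ that forces divergence. The only cosmetic difference is the normalization at the origin (you divide by $z^k$, the paper multiplies by $(1-z)^k/z^k$); both are valid.
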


If $\varphi$ is a differentiable function, taking $\displaystyle g(t)=t^{-1/p}$ we get
\begin{corollary}
 If there exists $M> 0$ such that
{$$n(t)+2/p\ge t\varphi'(t),\;\;\forall t\ge M,$$}
then {every} sequence $\displaystyle\mathcal N =\{z_n:\;|z_n|=\lambda_n,\;\forall n\in \mathbb N\}$ is a uniqueness  set  for the Fock space $\mathcal F^p_\varphi$.
\end{corollary}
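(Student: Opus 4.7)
The plan is to apply Theorem~\ref{Th1} directly with the choice $g(t) = t^{-1/p}$ suggested in the sentence preceding the corollary, and to verify that each of its hypotheses reduces to the hypothesis stated in the corollary.

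First, I would check the structural conditions on $g$. To ensure that $g$ maps $\mathbb R_+$ into $(0,1]$, I set $g(t) := \min(1,\,t^{-1/p})$; this coincides with $t^{-1/p}$ on $[1,\infty)$ and equals $1$ on $(0,1]$, and the modification on a bounded set affects neither the divergence of $\int_1^\infty g^p$ nor the pointwise inequality required only for $t\ge M$. Because $\log g(t) = -\frac{1}{p}\log t$ on $[1,\infty)$ and $\varphi$ is differentiable by assumption, the sum $\log g + \varphi$ is absolutely continuous. The integral hypothesis holds trivially, since
\[
\int_1^\infty g^p(t)\,dt = \int_1^\infty \frac{dt}{t} = \infty.
\]

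Next, I would reduce the density condition. On $[1,\infty)$ one computes
\[
t\,(\log g + \varphi)'(t) - \frac{1}{p} = t\left(-\frac{1}{pt} + \varphi'(t)\right) - \frac{1}{p} = t\varphi'(t) - \frac{2}{p},
\]
so the pointwise hypothesis $n(t) \ge t(\log g + \varphi)'(t) - 1/p$ of Theorem~\ref{Th1} is literally the same inequality as $n(t) + 2/p \ge t\varphi'(t)$, which is the hypothesis of the corollary. Invoking Theorem~\ref{Th1} with this $g$ then yields that every sequence $\mathcal N$ with $|z_n| = \lambda_n$ is a uniqueness set of $\mathcal F^p_\varphi$.

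There is essentially no obstacle here; the corollary is a direct specialization of Theorem~\ref{Th1}. The only minor bookkeeping is the extension of $t^{-1/p}$ to $(0,1)$ so as to meet the range constraint $g(\mathbb R_+) \subset (0,1]$, which is handled by capping at $1$ on $(0,1]$.
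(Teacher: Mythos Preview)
Your proposal is correct and follows exactly the route the paper indicates: the paper simply says ``taking $g(t)=t^{-1/p}$ we get'' the corollary, and your computation $t(\log g+\varphi)'(t)-\tfrac1p = t\varphi'(t)-\tfrac2p$ is precisely the intended verification. The capping of $g$ at $1$ on $(0,1]$ is a harmless bookkeeping detail the paper leaves implicit.
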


\begin{corollary}
A sequence $\mathcal N$ of the supercritical density with respect to $\mathcal F^p_\varphi$  is a uniqueness  set  for the Fock space $\mathcal F^p_\varphi$.
\end{corollary}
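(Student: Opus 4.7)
\medskip
\noindent\textbf{Proof proposal.}
The plan is to reduce this statement directly to the previous corollary, which asserts that a sequence $\mathcal N$ is a uniqueness set of $\mathcal F^p_\varphi$ whenever
$$
n(t)+\tfrac{2}{p}\ge t\varphi'(t),\qquad \forall t\ge M,
$$
for some $M>0$. So the whole task is to show that the supercritical density hypothesis implies this pointwise inequality for all sufficiently large $t$.

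By definition of supercritical density, there exists $A>1$ with
$$
\lim_{t\to\infty}\frac{n(t)}{t\varphi'(t)}=A.
$$
First I would check that $t\varphi'(t)\to\infty$: since $\Lambda$ is an unbounded nondecreasing sequence, $n(t)\to\infty$, and the ratio tends to a finite limit $A$, whence $t\varphi'(t)\to\infty$. (If one wishes to allow $A=+\infty$ formally, the argument below only becomes easier.)

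Pick $\varepsilon\in(0,A-1)$. For all $t$ large enough one has $n(t)\ge (A-\varepsilon)\,t\varphi'(t)$, hence
$$
n(t)-t\varphi'(t)\ \ge\ (A-\varepsilon-1)\,t\varphi'(t)\ \longrightarrow\ +\infty,
$$
so in particular $n(t)\ge t\varphi'(t)-2/p$ for every $t$ past some threshold $M$. This is exactly the hypothesis of the previous corollary, and the conclusion follows.

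The argument is entirely routine; there is no real obstacle, since Theorem \ref{Th1} and its corollary have already done all the analytic work (via Jensen's formula and inequality). The only point that merits a line of justification is the observation that $t\varphi'(t)\to\infty$, so that the additive constant $2/p$ is indeed dominated by the strictly positive gap $(A-1-\varepsilon)\,t\varphi'(t)$.
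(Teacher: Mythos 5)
Your reduction to the intermediate corollary ($n(t)+2/p\ge t\varphi'(t)$ for $t\ge M$) is exactly the intended route: the paper states this corollary as an immediate consequence and gives no separate argument, and your verification that the supercritical hypothesis forces $n(t)-t\varphi'(t)\to+\infty$ (hence the additive $2/p$ is harmless) is the only content needed. Correct and essentially the same approach as the paper.
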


\begin{proof}[Proof of Theorem \ref{Th1}]

We will prove it by contradiction. 
 Suppose that there is a function $f\in \mathcal F^p_\varphi$ such that the number of zeros $n(t)$ of this function satisfies the conditions of the theorem. Without loss of generality we can assume that $|f(0)|=1$. Indeed, if $f(0)=0 $ and $z=0$ is a zero of multiplicity $k$, we can consider a function $\displaystyle\widetilde f(z):=C\cdot f(z)\frac{(1-z)^k}{z^k}$ instead, where $\displaystyle C=\frac{k!}{f^{(k)}(0)}$ is the normalizing constant. The numbers of zeroes  of both functions   in the disk $\{z: |z|<t\}$  coincide for $t>1,$ and the conditions $f\in \mathcal F^p_\varphi$ and  $\widetilde f\in \mathcal F^p_\varphi$ are equivalent.

   Furthermore, by Jensen inequality for {$R>M$} we have
 \begin{multline*}
\log\int_{[0,2\pi]}|f(Re^{i\theta})|^p\frac{\rm d \theta}{2\pi}\ge p\int_{[0,2\pi]}\log|f(Re^{i\theta})|\frac{\rm d \theta}{2\pi}\\
=p\int_0^R \frac{n(t)}t{\rm d}t\ge {\int_M^R }\left (p\left(\log g(t)+\varphi(t)\right)'-\frac1t\right){\rm d}t\\= p\log g(R) +p\varphi(R) -\log R-{p(\log g(M)+\vphi(M))+\log M}.
\end{multline*}

Hence for some $C>0$ and for all {$R>M$}  
$$
\int_{[0,2\pi]}|f(Re^{i\theta})|^pe^{-p\varphi(R)}\frac{\rm d \theta}{2\pi}\ge C
\frac {g^p(R)}{R}.
$$

Consequently,  for $f\in\mathcal F^p_\varphi$ we get
\begin{multline*}\|f\|^p_{p,\varphi}=\int_{\mathbb C} |f(z)|^pe^{-p\varphi(|z|)}dm(z)\\\ge{\int_M^\infty} \int_{[0,2\pi]}
|f(Re^{i\theta})|^pe^{-p\varphi(R)}R{\rm d}\theta\;{\rm d}R\ge
C\int_{M}^\infty g^p(R){\rm d}R=\infty.
\end{multline*}
 The contradiction proves the theorem.
\end{proof}

{
Although the following corollary can  be derived from the Theorem \ref{Th1}, it is simpler to give a direct argument. We use the standard notation $\lfloor x\rfloor$ to denote the integer part of $x$.
\begin{corollary}
\label{nu} Every sequence $\mathcal N=\left\{z_n:\;|z_n|=\sqrt {n+\alpha},\;n\in\mathbb N
\right\},\; \alpha\le 1/2, $
is a uniqueness  set  for the  classical Fock  space $\mathcal F^2$.
\end{corollary}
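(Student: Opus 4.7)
The strategy is to argue by contradiction, following the Jensen-formula scheme of the proof of Theorem \ref{Th1} but computing the relevant integrals with $O(1)$ accuracy (rather than via a pointwise lower bound on $n(t)$), so that the threshold $\alpha=1/2$ emerges as the exact cutoff for the divergence of an explicit integral. A pointwise bound of the form $n(t)\ge t^2-1-\alpha$ combined with the Theorem \ref{Th1} machinery only handles $\alpha\le 0$; the critical regime requires the sharper radial asymptotic described below.

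Assume some nonzero $f\in\mathcal F^2$ vanishes on $\mathcal N$. The normalization trick from the proof of Theorem \ref{Th1} reduces to $|f(0)|=1$, so Jensen's formula together with concavity of $\log$ yields
\[
\int_0^{2\pi}|f(Re^{i\theta})|^2\,d\theta \;\ge\; 2\pi \exp\bigl(2N(R)\bigr),\qquad N(R):=\int_0^R \frac{n(t)}{t}\,dt.
\]

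The heart of the proof is a sharp asymptotic for $N(R)$. Summation by parts at $R=\lambda_{m+1}$ gives
\[
N(\lambda_{m+1}) = m\log\lambda_{m+1}-\sum_{k=1}^m \log\lambda_k = \tfrac12\bigl[m\log(m+1+\alpha)-\log\Gamma(m+1+\alpha)+\log\Gamma(1+\alpha)\bigr],
\]
and applying Stirling's formula to $\log\Gamma(m+1+\alpha)$ (keeping the $\tfrac12\log s$ correction) produces
\[
N(\lambda_{m+1}) = \tfrac{m}{2} - \tfrac{2\alpha+1}{4}\log m + O(1).
\]
Rewriting in terms of $R^2=m+1+\alpha$ and extending to all large $R$ by using the explicit formula $N(R)=N(\lambda_m)+m\log(R/\lambda_m)$ on each interval $(\lambda_m,\lambda_{m+1}]$, one obtains $2N(R)-R^2 \ge -(2\alpha+1)\log R - C$ for all $R\ge R_0$.

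Plugging this into the Fock norm,
\[
\|f\|_2^{\,2}\;\ge\;2\pi\int_{R_0}^\infty R\,e^{2N(R)-R^2}\,dR\;\ge\;c\int_{R_0}^\infty R^{-2\alpha}\,dR,
\]
which diverges for every $\alpha\le 1/2$ (logarithmically when $\alpha=1/2$), contradicting $f\in\mathcal F^2$. The delicate point, and the reason a bare pointwise bound is insufficient, is the precise coefficient $-(2\alpha+1)$ of $\log R$: the naive estimate $n(t)\ge t^2-1-\alpha$ gives only $-2(1+\alpha)\log R$ in the exponent and thus handles only $\alpha\le 0$. Extracting the correct exponent requires either the second-order expansion $k\log\!\bigl(1+(k+\alpha)^{-1}\bigr)=1-(\alpha+\tfrac12)/(k+\alpha)+O(k^{-2})$ or, equivalently, the $\tfrac12\log s$ term in Stirling's formula, and it is precisely this subleading contribution that identifies $\alpha=1/2$ as the sharp threshold.
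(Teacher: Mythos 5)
Your proof is correct and follows essentially the same route as the paper's: apply Jensen's formula to bound $\log\int_0^{2\pi}|f(Re^{i\theta})|^2\,\frac{d\theta}{2\pi}$ from below by $2\sum_{|z_n|\le R}\log(R/|z_n|)$, express this sum through $\log\Gamma$, and extract the subleading $-(2\alpha+1)\log R$ term from Stirling's $\tfrac12\log s$ correction before integrating in $R$ to contradict $\|f\|_{2}<\infty$. The only cosmetic difference is that you first evaluate $N(R)$ at the special radii $R=\lambda_{m+1}$ and then interpolate, whereas the paper works directly with $\lfloor R^2-\alpha\rfloor$ for arbitrary $R$.
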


\begin{proof}[Proof of Corollary \ref{nu}]

Let $f\in\mathcal F^2$ and $\lambda_n=|z_n|=\sqrt {n+\alpha}$. Assume, without loss of generality,  that $|f(0)|=1$.
Then 
$$
\log\int_{[0,2\pi]}|f(Re^{it})|^2\frac{\rm d t}{2\pi}\ge 2\int_{[0,2\pi]}\log|f(Re^{it})|\frac{\rm d t}{2\pi}=$$
$$=2\sum_{|z_n|\le R}\log\frac R{|z_n|}
\ge 2\lfloor R^2-\alpha\rfloor\log R-\log \Gamma \left(\lfloor R^2-\alpha\rfloor+\alpha+1\right)$$
$$= 2\lfloor R^2-\alpha\rfloor\log R-\left(\lfloor R^2-\alpha\rfloor+\alpha+1/2\right)
\log(\lfloor R^2-\alpha\rfloor+\alpha+1)+\lfloor R^2-\alpha\rfloor+O(1)$$
$$=-(2\alpha+1)\log R +R^2+O(1).$$

Hence, 
$$
\int_{[0,2\pi]}|f(Re^{it})|^2e^{-R^2}\frac{\rm d t}{2\pi}\ge 
C/R^{1+2\alpha},$$
and
$$\int_{\mathbb C} |f(z)|^2e^{-|z|^2}dm(z)\ge\int_1^\infty \int_{[0,2\pi]}
|f(Re^{it})|^2e^{-R^2}R{\rm d}t\;{\rm d}R\ge \int_1^{\infty} \frac{C}{R^{2\alpha}}{\rm d}R=+\infty.$$

\end{proof}
{{\it Open question:} How to describe the set of all $\alpha>0$ such that every sequence $\mathcal N=\left\{z_n:\;|z_n|=\sqrt {n+\alpha},\;n\in\mathbb N
\right\} $
is a uniqueness  set  for the  classical Fock  space $\mathcal F^2$?}

}

\section[Subcritical density]{Sufficient condition  for random zero set: \\ subcritical density}\label{L}

The  results of this section are based on the Levin-Pfluger theory of entire functions of so called completely regular growth (Appendix \ref{LP1}, \ref{LP2}), a detailed presentation of these results can be found in  Chapter II of  Levin's book \cite{Levin}. 
 We recall some definitions that will be used.

\begin{definition}
A function $\rho(t)$ is called a {\bf proximate order}, if it  satisfies the following conditions
\begin{align*}
0<\lim_{t\to\infty}\rho(t)=\rho <\infty;\\
\lim_{t\to\infty}t\rho'(t)\log t=0.
\end{align*}
\end{definition}

Given a sequence $\mathcal N\subset \mathbb C$ denote by
$n(t,\alpha, \beta)$  the number of the elements of  $\mathcal N$ in the sector $S(t,\alpha,\beta):=\{z\in\mathbb C: |z|<t, \arg z\in (\alpha, \beta)\}.$ 

\begin{definition} The sequence $\mathcal N$ is said to have an {\bf angular density} with respect to the proximate order  $\rho(r)$ if for any $\alpha$ and $\beta$ except for some countable set there exists a finite limit $$\lim_{r\to\infty}\frac{n(r,\alpha, \beta)}{r^{\rho(r)}}.$$ 
\end{definition}

The main result of this section is the following theorem.

\begin{theorem}\label{Th2}
 { Let  $\varphi(t)=t^{\rho(t)}$ where $\rho(t)$ is a proximate order. 
Suppose that a sequence 
$\Lambda=\left\{\lambda_k:\; \lambda_k\ge1\right\}$ is nondecreasing and
of  subcritical density with respect to the Fock space $\mathcal F^p_\varphi$.
Then the random sequence $\mathcal N_\Lambda$ is almost surely a zero set of the Fock space $\mathcal F_\varphi^p$.}
\end{theorem}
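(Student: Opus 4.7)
The plan is to show that, almost surely, the Weierstrass canonical product $f(z)=\prod_n E(z/z_n,q)$ with genus $q=\lfloor\rho\rfloor$ converges, vanishes exactly on $\mathcal{N}_\Lambda$, and lies in $\mathcal{F}^p_\varphi$. The first step is to verify, almost surely, that $\mathcal{N}_\Lambda$ possesses a rotation-invariant angular density with respect to the proximate order $\rho(r)$. For a fixed arc $(\alpha,\beta)\subset[0,2\pi]$, the sector counting function
$n(r,\alpha,\beta)=\sum_{n:\lambda_n<r}\mathbf{1}_{\{\theta_n\in(\alpha,\beta)\}}$
is a sum of i.i.d.\ Bernoullis with parameter $(\beta-\alpha)/(2\pi)$, so the strong law of large numbers gives $n(r,\alpha,\beta)/n(r)\to(\beta-\alpha)/(2\pi)$ almost surely. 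Combined with the subcritical hypothesis $n(r)/(r\varphi'(r))\to A<1$ and the proximate-order identity $r\varphi'(r)=(\rho+o(1))r^{\rho(r)}$ (which follows from $\rho(t)\to\rho$ and $t\rho'(t)\log t\to 0$), this yields almost surely
\[
\lim_{r\to\infty}\frac{n(r,\alpha,\beta)}{r^{\rho(r)}}=\frac{A\rho}{2\pi}(\beta-\alpha).
\]
Applying this to a countable dense family of arcs and using monotonicity in the endpoints upgrades the convergence to all arcs simultaneously.

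By the Levin--Pfluger theory (Appendix~\ref{LP1},~\ref{LP2}), the existence of this angular density implies that $f$ is an entire function of completely regular growth with proximate order $\rho(r)$. Its indicator $h_f(\theta)$ is computed from the angular density via Levin's integral formula, and the rotation invariance established above forces $h_f\equiv h_0$ to be constant. The linear dependence of Levin's formula on the density measure gives $h_0=A\cdot h_{\mathrm{crit}}$, where $h_{\mathrm{crit}}$ is the indicator associated with the critical density $A=1$. For the weight $\varphi(r)=r^{\rho(r)}$, the notion of critical density in the paper is calibrated so that the canonical product attached to a critical-density sequence grows at the exact rate $\varphi(r)$, i.e., $h_{\mathrm{crit}}=1$; hence $h_0=A<1$.

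Finally, the definition of the indicator, together with the Phragmen--Lindelof principle that underlies completely regular growth, yields the uniform upper bound $\log|f(re^{i\theta})|\le (h_0+\varepsilon)\,\varphi(r)$ for every $\theta$ and every $r\ge R_0(\varepsilon)$. Taking $\varepsilon=(1-h_0)/2$ and integrating in polar coordinates gives
\[
\|f\|_{p,\varphi}^p\le C_0+2\pi\int_{R_0}^{\infty} e^{-p(1-h_0)\varphi(r)/2}\,r\,dr<\infty,
\]
so $f\in\mathcal{F}^p_\varphi$ and $\mathcal{N}_\Lambda$ is almost surely a zero set. The main obstacle is the clean identification $h_{\mathrm{crit}}=1$ for a general proximate order $\rho(r)$: Levin's indicator formula involves trigonometric kernels whose form changes at integer $\rho$, where the Weierstrass genus jumps and logarithmic corrections can appear. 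Matching this formula against the differential $t\varphi'(t)$ from the definition of critical density is the essential bookkeeping. A secondary subtlety is ensuring that the angular density provided by the SLLN is in the precise form required by Levin--Pfluger (existence of the limit for all but countably many directions), which is handled by the countable-dense-arc argument above.
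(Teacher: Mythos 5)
Your outline follows the paper's route fairly closely on the non-integer side: establish almost-sure angular density (you use the SLLN on a countable family of arcs, the paper uses a Weyl-type criterion — both work), feed it into Levin--Pfluger, get the indicator $A<1$, and integrate. But you leave a genuine gap exactly where you yourself flag ``the main obstacle'': the integer case $\rho\in\mathbb N$.

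For integer $\rho$ you cannot simply invoke Levin--Pfluger theory on the canonical product $\prod G(z/z_k;\rho)$. Theorem~II (the integer version in the paper's Appendix~\ref{LP2}) requires, in addition to the angular density, that the limits
$S=\sum z_n^{-\rho}$ and $\delta=\lim_{r\to\infty} r^{\rho-\rho(r)}\sum_{|z_n|>r} z_n^{-\rho}$
exist and be finite, and it applies not to the canonical product itself but to the modified function $\widetilde W(z)=e^{Sz^{\rho}}\prod G(z/z_k;\rho)$. Note that $\sum\lambda_k^{-\rho}=\infty$ in the subcritical regime, so the absolute series diverges; the almost-sure convergence of $\sum z_k^{-\rho}$ is a nontrivial consequence of the random rotations (Khinchine--Kolmogorov), and the quantitative decay $\delta=0$ requires a Chebyshev/Borel--Cantelli argument plus control over blocks $(R_n,R_{n+1}]$ using the regularity of $\varphi$. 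The paper devotes Lemma~\ref{l1} to exactly this, and without it your claim that the indicator is constant and equals $A$ is unsupported for integer $\rho$ (the residual term $\delta$ contributes a $\cos(\rho(\theta-\arg\delta))$ piece to the indicator that only vanishes because $\delta=0$). A secondary, lesser, issue: you assert $h_{\mathrm{crit}}=1$ by ``calibration'' rather than by carrying out the (short but necessary) integration of Levin's kernel against the uniform density, which yields $h_0=a/\rho$ and then $h_0=A$ via $r\varphi'(r)\sim\rho\, r^{\rho(r)}$.
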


\begin{corollary}
Let $\Lambda:=\{\lambda_n\},$ where $\lambda_n\sim a\sqrt n, \;n\to\infty,$ with $a>1$. Then the sequence $\mathcal N_\Lambda$ is almost surely a zero set of the classical Fock space $\mathcal F^p$.
\end{corollary}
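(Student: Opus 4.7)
The plan is to derive this corollary directly from Theorem \ref{Th2}. Two points must be checked: (i) that the classical weight $\varphi(r)=r^2/2$ can be written as $r^{\rho(r)}$ for some proximate order $\rho(r)$, and (ii) that the hypothesis $\lambda_n\sim a\sqrt n$ with $a>1$ makes $\Lambda$ of subcritical density with respect to $\mathcal F^p_\varphi$.

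For (i), I would set $\rho(r):=2-\log 2/\log r$ for $r>1$. Then
$$r^{\rho(r)}=e^{(2-\log 2/\log r)\log r}=e^{2\log r-\log 2}=r^2/2,$$
so $\varphi(r)=r^{\rho(r)}$. To see that $\rho$ is a proximate order: $\rho(r)\to 2$ as $r\to\infty$, and $\rho'(r)=\log 2/(r\log^2 r)$ gives $r\rho'(r)\log r=\log 2/\log r\to 0$, as required.

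For (ii), the hypothesis $\lambda_n\sim a\sqrt n$ yields $n(t)\sim t^2/a^2$, and since $t\varphi'(t)=t^2$,
$$\lim_{t\to\infty}\frac{n(t)}{t\varphi'(t)}=\frac{1}{a^2}<1$$
because $a>1$. Hence $\Lambda$ is of subcritical density with respect to $\mathcal F^p_\varphi=\mathcal F^p$ (after discarding, if needed, the finitely many $\lambda_n<1$, which does not affect being a zero set, since one may multiply by a polynomial factor). Applying Theorem \ref{Th2} then yields that $\mathcal N_\Lambda$ is almost surely a zero set of $\mathcal F^p$.

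There is essentially no obstacle at the level of this corollary: all of the analytic work---construction of the canonical Weierstrass product, verification of completely regular growth via the Levin-Pfluger theorems in the Appendix, and the passage from subcritical density to summability of $|f|^p e^{-p\varphi}$---is already carried out in the proof of Theorem \ref{Th2}. The corollary is a direct specialization, once one exhibits a proximate-order representation of the classical weight.
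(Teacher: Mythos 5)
Your proposal is correct and is precisely the verification the paper intends (and in fact leaves implicit for this corollary, carrying out the analogous computation only for the lattice Corollary~\ref{lattice}). Exhibiting the proximate order $\rho(r)=2-\log 2/\log r$ so that $r^{\rho(r)}=r^2/2$, checking $r\rho'(r)\log r=\log 2/\log r\to 0$, and computing $\lim n(t)/(t\varphi'(t))=1/a^2<1$ is exactly what reduces the corollary to Theorem~\ref{Th2}; the remark about discarding the finitely many $\lambda_n<1$ is a sensible and standard observation.
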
\begin{corollary}
\label{lattice}
The sequence $\mathcal N_\Lambda$, where $\displaystyle\Lambda=\sqrt{a\pi}\left|\mathbb Z+i\mathbb Z\right|$ with $a>1$, is almost surely a zero set of the classical Fock space $\mathcal F^p$.
\end{corollary}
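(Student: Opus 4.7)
The plan is to deduce this as a direct application of Theorem \ref{Th2}, so the work reduces to verifying the two hypotheses: that the weight $\varphi(t)=t^2/2$ can be written as $t^{\rho(t)}$ with $\rho(t)$ a proximate order, and that the sequence $\Lambda=\sqrt{a\pi}|\mathbb Z+i\mathbb Z|$ has subcritical density for $a>1$.

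First I would write
$$\varphi(t)=\frac{t^2}{2}=t^{\rho(t)}, \qquad \rho(t)=2-\frac{\log 2}{\log t},$$
and check the two defining properties of a proximate order. Clearly $\rho(t)\to 2$, and
$$t\rho'(t)\log t=\frac{\log 2}{\log t}\xrightarrow[t\to\infty]{}0,$$
so $\rho$ is a proximate order of order $2$.

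Next I would estimate the counting function for $\Lambda$. By definition, $n(t)$ equals the number of pairs $(m,k)\in\mathbb Z^2$ with $\sqrt{a\pi}\sqrt{m^2+k^2}<t$, i.e.\ the number of integer lattice points inside the disk of radius $t/\sqrt{a\pi}$. By the Gauss circle estimate this equals $\pi\cdot t^2/(a\pi)+O(t)=t^2/a+O(t)$. Since $t\varphi'(t)=t^2$, it follows that
$$\lim_{t\to\infty}\frac{n(t)}{t\varphi'(t)}=\frac{1}{a}<1,$$
so $\Lambda$ is of subcritical density with respect to $\mathcal F^p_\varphi=\mathcal F^p$.

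The one minor issue is that $\Lambda$ starts with $\lambda_1=0$ and its first few terms are smaller than $1$, while Theorem \ref{Th2} requires $\lambda_k\ge 1$. I would handle this by replacing the finitely many offending terms by $1$, producing a nondecreasing sequence $\widetilde\Lambda$ with $\widetilde\lambda_k\ge 1$ and the same counting function up to $O(1)$, hence still subcritical. Since modifying finitely many points of a sequence does not affect whether its randomization is a zero set (one may divide out or multiply in finitely many elementary factors without leaving $\mathcal F^p$), Theorem \ref{Th2} applied to $\widetilde\Lambda$ yields that $\mathcal N_\Lambda$ is almost surely a zero set of $\mathcal F^p$. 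There is no substantial obstacle here; the only point worth checking with care is the $O(t)$ remainder in the lattice count, which is harmless since we only need the leading term to conclude $A=1/a<1$.
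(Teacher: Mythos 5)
Your proof is correct and follows essentially the same route as the paper: use the Gauss circle problem to compute $\lim_{t\to\infty} n(t)/t^2 = 1/a < 1$, identify this as subcritical density, and apply Theorem~\ref{Th2}. You are slightly more careful than the paper in two small respects that the paper silently takes for granted --- you verify explicitly that $t^2/2 = t^{\rho(t)}$ with $\rho(t)=2-\log 2/\log t$ a proximate order, and you flag and fix the technical point that $\lambda_1=0<1$ violates the literal hypothesis $\lambda_k\ge 1$ of Theorem~\ref{Th2} --- but these are harmless clean-ups, not a different approach.
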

To show that $\Lambda$ from the Corollary \ref{lattice} satisfies the conditions of  Theorem \ref{Th2}, note that 
$$n(t)=\#\left\{(b,c)\in\mathbb Z^2: \sqrt{ a\pi}\left|b+ic\right|<t\right\}=\#\left\{(b,c)\in\mathbb Z^2: \left|b+ic\right|<\frac t{\sqrt{ a\pi}}\right\}=N\left(
\frac{t}{\sqrt{a\pi}}\right),$$
 here we use the standard notation from the Gauss circle problem and denote by $N(t)$  the number of integer lattice points inside
a circle of radius $t$. It is well known that
$$\lim_{t\to\infty}\frac{N(t)}{t^2}=\pi,$$
and so we have 
$$\lim_{t\to\infty}\frac{n(t)}{t^2}=\frac1a<1.$$

\begin{rem}\label{Rem}
Since $\varphi(t)=t^{\rho(t)},$ where $\rho(t)$ is a  proximate order, we have
$$\lim_{r\to\infty}\frac{r\varphi'(r)}{\varphi(r)}=
\lim_{r\to\infty}\left(\rho(r)+r\rho'(r)\log r \right)=\rho,
$$
and so  the subcriticality of the sequence $\Lambda$ is equivalent to the condition 
 \begin{align}\label{n(r)}
\lim_{r\to\infty}\frac{n(r)}{\varphi(r)}=a<\rho.
\end{align}
\end{rem}
Let us first prove some auxiliary results.

\subsection{}

In this section we show that a random sequence $\mathcal N_\Lambda$ in Theorem \ref{Th2} almost surely obeys some regularity properties.

The following lemma is to show that  $\mathcal N_\Lambda$  with probability one has an angular density of index $\rho(r)$, { which is equivalent, by  Remark \ref{Rem}, to the statement that with probability one for any $\alpha$ and $\beta$ the  limit 
$\displaystyle\lim_{r\to\infty}\frac {n(r,\alpha,\beta)}{n(r)}$
exists and is finite.
}

\begin{lemma}\label{ln}
With probability one the random sequence $\mathcal N_\Lambda$ obeys the following property:
$$\lim_{r\to\infty}\frac{n(r,2\pi x, 2\pi y)}{n(r)}=y-x,\;\;\;0\le x< y\le 1.$$
\end{lemma}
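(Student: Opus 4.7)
The plan is to apply a strong law of large numbers plus a standard Glivenko--Cantelli extension. The random variables $\eta_n := \theta_n/(2\pi)$ are i.i.d.\ uniform on $[0,1]$, and since $\Lambda$ is nondecreasing with $\lambda_n\to\infty$ we have $n(r)\to\infty$, with $\{n:\lambda_n<r\}=\{1,2,\ldots,n(r)\}$. Therefore
$$\frac{n(r,2\pi x,2\pi y)}{n(r)}=\frac{1}{n(r)}\sum_{n=1}^{n(r)}\mathbf 1_{\{\eta_n\in(x,y)\}},$$
and the problem reduces to studying empirical frequencies of i.i.d.\ uniform samples along the random (but $\theta$-independent) subsequence $N=n(r)$.

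First I would fix a pair of rationals $0\le x<y\le 1$. The indicators $\mathbf 1_{\{\eta_n\in(x,y)\}}$ are i.i.d.\ Bernoulli with mean $y-x$, so the classical SLLN gives
$$\frac{1}{N}\sum_{n=1}^{N}\mathbf 1_{\{\eta_n\in(x,y)\}}\xrightarrow[N\to\infty]{}y-x\quad\text{a.s.}$$
Composing with $N=n(r)\to\infty$ yields the statement for this fixed rational pair $(x,y)$ on an event $\Omega_{x,y}$ of full probability. Intersecting over the countable collection of rational pairs produces a single full-probability event $\Omega_0$ on which the convergence holds simultaneously for all rational endpoints.

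Finally, to pass to arbitrary real $0\le x<y\le 1$, I would use the monotonicity of $n(r,\cdot,\cdot)$ in its arguments. Given $\varepsilon>0$, choose rationals $x',x'',y',y''\in[0,1]$ with $x-\varepsilon<x'\le x\le x''<x+\varepsilon$ and $y-\varepsilon<y'\le y\le y''<y+\varepsilon$ (and $x''<y'$). Then
$$n(r,2\pi x'',2\pi y')\le n(r,2\pi x,2\pi y)\le n(r,2\pi x',2\pi y''),$$
so dividing by $n(r)$ and applying the already-established convergence at the rational endpoints gives
$$y'-x''\le\liminf_{r\to\infty}\frac{n(r,2\pi x,2\pi y)}{n(r)}\le\limsup_{r\to\infty}\frac{n(r,2\pi x,2\pi y)}{n(r)}\le y''-x'$$
on $\Omega_0$. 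Letting $\varepsilon\to 0$ closes the sandwich and yields the limit $y-x$. There is no real obstacle here: the only subtlety is ensuring a single null set handles all pairs simultaneously, which is why the rational approximation/monotonicity step is done after, not before, intersecting the countably many SLLN exceptional sets.
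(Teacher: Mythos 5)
Your proof is correct, but it takes a genuinely different (and more elementary) route than the paper. The paper invokes Holewijn's Weyl-type criterion for almost sure uniform distribution modulo $1$ of a sequence of independent random variables: since each $\theta_n/(2\pi)$ is uniform on $[0,1]$, its characteristic function vanishes at $2\pi k$ for every nonzero integer $k$, so the Weyl-type condition is trivially satisfied and the empirical distribution converges a.s.\ for all $x$ simultaneously; the lemma then follows by subtraction. You instead reprove the needed Glivenko--Cantelli-type conclusion from scratch: fix rational endpoints, apply the strong law of large numbers to the i.i.d.\ Bernoulli indicators $\mathbf 1_{\{\eta_n\in(x,y)\}}$, intersect the countably many full-measure events, and then pass to arbitrary real endpoints by monotonicity of $(x,y)\mapsto n(r,2\pi x,2\pi y)$. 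Both arguments are sound; yours is self-contained and uses only the textbook SLLN, while the paper's is shorter at the cost of citing a less standard result. One small wording correction: you call $N=n(r)$ a ``random'' subsequence, but $n(r)$ counts points with $\lambda_n<r$, so it depends only on the deterministic sequence $\Lambda$ and not on the angles $\theta_n$ — it is deterministic, which is exactly why composing the a.s.\ SLLN limit with $N=n(r)\to\infty$ is legitimate. Your remark about intersecting the null sets \emph{before} the rational sandwich is the right point of care, and you handle it correctly.
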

\begin{proof}
This lemma follows immediately from the Weyl-type criterion (Appendix, \ref{Holew}). Indeed, applying this criterion to the sequence $\displaystyle X_n=\frac{\theta_n}{2\pi}$
with $\displaystyle\phi_n(x)=\frac{e^{ix}-1}{ix},$  $n\in\mathbb N$, we get  for $x\in[0,1]$
$$ \lim_{r\to\infty} \frac{n(r,0, 2\pi x)}{n(r)}=
\lim_{N\to\infty} \frac{\#\{j:X_j< x\}}N =x\;\;\;{\mbox a.s.} $$
\end{proof}

\begin{lemma}\label{l1} Under the conditions   of  Theorem \ref{Th2}, if  $\rho\in\mathbb N$, then the series $\displaystyle \sum_k \frac 1{z_k^\rho}$ converges almost surely. {
Furthermore,  almost surely
\begin{equation}\label{tail}\displaystyle\lim_{r\to \infty} {r^{\rho-\rho(r)}}\cdot{\displaystyle\sum_{|z_k|> r} \frac 1{z_k^\rho}}=0.\end{equation}}
\end{lemma}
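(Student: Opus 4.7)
The plan is to observe that for $\rho\in\mathbb N$ the random variables $\xi_k:=e^{-i\rho\theta_k}$ are i.i.d.\ uniform on the unit circle (since $\rho\theta_k$ is again uniform mod $2\pi$), so that $1/z_k^\rho=\xi_k/\lambda_k^\rho$ is a sequence of independent, centered, bounded complex random variables scaled by the deterministic weights $\lambda_k^{-\rho}$. The whole lemma is therefore an application of Kolmogorov's inequalities, once the weight estimate is in place.

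First I would show $\sum_k \lambda_k^{-2\rho}<\infty$. By Abel summation and the subcritical bound $n(r)\le (a+\varepsilon)r^{\rho(r)}$ with $a+\varepsilon<\rho$ (see Remark \ref{Rem}), one has
\begin{equation*}
\sum_k \frac{1}{\lambda_k^{2\rho}}=2\rho\int_1^\infty \frac{n(r)}{r^{2\rho+1}}\,dr
\le C\int_1^\infty r^{\rho(r)-2\rho-1}\,dr<\infty,
\end{equation*}
since the exponent tends to $-\rho-1<-1$. Kolmogorov's theorem then yields a.s.\ convergence of $\sum_k \xi_k/\lambda_k^\rho$, giving the first statement.

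For the tail, set $T(r):=\sum_{\lambda_k>r}\xi_k/\lambda_k^\rho$. Using orthogonality of the $\xi_k$, Abel summation and the standard fact that for a proximate order $\int_r^\infty t^{\rho(t)-2\rho-1}dt\sim r^{\rho(r)-2\rho}/(2\rho-\rho(r))$, I obtain
\begin{equation*}
E|T(r)|^2=\sum_{\lambda_k>r}\frac{1}{\lambda_k^{2\rho}}\le C\, r^{\rho(r)-2\rho},
\qquad
E\bigl|r^{\rho-\rho(r)}T(r)\bigr|^2\le C\, r^{-\rho(r)}.
\end{equation*}
Setting $r_n=2^n$, Chebyshev gives $P\bigl(|r_n^{\rho-\rho(r_n)}T(r_n)|>\varepsilon\bigr)\le C\varepsilon^{-2}r_n^{-\rho(r_n)}$, a summable sequence since eventually $\rho(r_n)\ge\rho/2\ge 1/2$. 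Borel--Cantelli then gives $r_n^{\rho-\rho(r_n)}T(r_n)\to 0$ a.s.\ along the dyadic scale.

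To interpolate between $r_n$ and $r_{n+1}$, I would apply Kolmogorov's maximal inequality to the partial sums $\sum_{r_n<\lambda_k\le r}\xi_k/\lambda_k^\rho$ to estimate $M_n:=\sup_{r_n\le r\le r_{n+1}}|T(r)-T(r_n)|$ by the same quantity $C\, r_n^{\rho(r_n)-2\rho}$, so that another application of Borel--Cantelli yields $r_n^{\rho-\rho(r_n)}M_n\to 0$ a.s. Combined with the fact that $r\mapsto r^{\rho(r)}$ is regularly varying of index $\rho$ (so $r^{\rho-\rho(r)}$ is slowly varying and, by Potter's bound, comparable to $r_n^{\rho-\rho(r_n)}$ uniformly on $[r_n,2r_n]$), this gives $\sup_{r_n\le r\le r_{n+1}} r^{\rho-\rho(r)}|T(r)|\to 0$ a.s., which is \eqref{tail}. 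The main technical obstacle is to justify the uniform comparison of $r^{\rho-\rho(r)}$ on dyadic intervals and the proximate-order estimate of the tail integral; both are classical consequences of the Karamata theory and of the definition of proximate order, so I would invoke them from Levin's book rather than reprove them here.
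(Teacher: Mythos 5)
Your proof is correct, and while its first half (variance computation plus Kolmogorov/Khinchine--Kolmogorov) is essentially the paper's argument, your treatment of the tail estimate \eqref{tail} takes a genuinely different route. The paper chooses a \emph{slowly} growing scale $R_n=n^{5/\rho}$, so that $R_{n+1}/R_n\to 1$; this lets it interpolate between consecutive $R_n$'s by a purely deterministic bound, namely
$\bigl|\sum_{R_n<|z_k|\le r}z_k^{-\rho}\bigr|\le\bigl(n(R_{n+1})-n(R_n)\bigr)/R_n^\rho$,
which is $o\bigl(r^{\rho(r)-\rho}\bigr)$ precisely because the counting function changes so little over $[R_n,R_{n+1}]$. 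You instead take dyadic radii $r_n=2^n$, for which the counting function changes by a constant proportion and the deterministic bound would no longer be $o(1)$; to compensate, you control the whole oscillation of the tail sum over $[r_n,r_{n+1}]$ \emph{probabilistically}, via Kolmogorov's maximal inequality (Appendix~\ref{KolmMax}), together with the uniform convergence theorem for the slowly varying factor $r^{\rho-\rho(r)}=r^\rho/\varphi(r)$ over dyadic blocks. Both routes are valid; yours uses one more probabilistic tool but avoids the ad hoc choice of scale $n^{5/\rho}$ and gives a sharper variance bound $\mathbb E|T(r)|^2\lesssim r^{\rho(r)-2\rho}$ (the paper settles for the cruder $r^{-\rho/2}$), while the paper's deterministic interpolation is self-contained and does not require the maximal inequality. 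Your preliminary remark that for integer $\rho$ the phases $e^{-i\rho\theta_k}$ are again uniform on the circle is a nice simplification, though it is not strictly needed (centering and boundedness already suffice, as the paper uses).
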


 \begin{proof}
 To justify  the convergence of the series of variances $ \displaystyle{\rm Var}\frac 1{ z_k^\rho}$, recall that by subcriticality of $\Lambda$, or, equivalently, by  (\ref{n(r)}) we have for some $C>0$
 \begin{align*}
  n(r)\le Cr^{3\rho/2}, \;\; r\ge 1.
 \end{align*}
 Since the sequence $\Lambda$ is  increasing, it follows that 
  \begin{align}\label{l^rho}
k\le n(\lambda_k+1)\le C\cdot(\lambda_k+1)^{3\rho/2},\;\;k\ge 1.
 \end{align}
  Finally,
  $$\displaystyle\sum {\rm Var}\frac 1{z_k^\rho}= \sum \lambda_k^{-2\rho}\le C_1 \sum k^{-4/3}<\infty.
  $$
    Now we can use the {Khinchine-Kolmogorov theorem (Appendix, \ref{3s})  for the random variables $\displaystyle Y_k={\rm Re}\frac 1{z_k^\rho}$ and $\displaystyle Y_k={\rm Im}\frac 1{z_k^\rho}$. 
 It follows that the series  $\displaystyle \sum_k \frac 1{ z_k^\rho}$ converges almost surely.}
 
 { Now we estimate the rate of convergence of this series. Since $\mathbb E\left(\displaystyle\sum_{|z_k|> r} \frac 1{z_k^\rho}\right)=0$ 
 and 
 $$\displaystyle {\rm Var}\left(\sum_{|z_k|> r}\frac 1{z_k^\rho}\right)= \sum_{\lambda_k> r} \lambda_k^{-2\rho}\le C_2 \int_r^\infty \frac{{\rm d}n(t)}{t^{2\rho}}\le C_3\left(\frac{n(r)}{r^{2\rho}}+\int_r^\infty\frac{t^{\rho(t)}}{t^{2\rho+1}}{\rm d}t\right)\le \frac{C_4}{\sqrt {r^\rho}},
  $$ 
  by Chebyshev's inequality we have
  
  $$\mathbb P\left(\left|\sum_{|z_k|> r}\frac 1{z_k^\rho}\right|\ge 
  r^{-\rho/ 8}\right)  \le C_5r^{-\rho/ 4}.
 $$
 Put $R_n=n^{5/\rho}$. Then
 $$\mathbb P\left(\left|\sum_{|z_k|> R_n}\frac 1{z_k^\rho}\right|\ge 
  n^{-5/8}\right)  \le C_5n^{-5/4}.
 $$ 
 Hence, by the Borel-Cantelli lemma, almost surely there exists { (a random)} $N\in\mathbb N$ such that for $n>N$
 \begin{equation}\label{Rate1}\left|\sum_{|z_k|> R_n}\frac 1{z_k^\rho}\right|< R_n^{-\rho/8}. \end{equation}

Let now $r\in(R_{n},R_{n+1}), \;\;n>N$.
Then
$$\left|\sum_{R_n<|z_k|\le r}\frac 1{z_k^\rho}\right|\le \sum_{R_n<|z_k|\le r}\frac 1{|z_k|^\rho}\le \frac{n(R_{n+1})-n(R_{n})}{R_n^\rho}.
$$
Recall that 
$$\lim_{r\to\infty}\frac{n(r)}{\varphi(r)}=a<\rho.
$$
Hence,
$$r^{\rho-\rho(r)}\left|\sum_{R_n<|z_k|\le r}\frac 1{z_k^\rho}\right|\le 
\frac{(a+o(1))\varphi(R_{n+1})-(a+o(1))\varphi(R_{n})}{\left(1+o(1)\right)\varphi(R_n)}.$$
By Remark \ref{Rem}
$$\log\frac{\vphi(R_{n+1})}{\vphi(R_{n})}= \int_{R_{n}}^{R_{n+1}}\frac{\vphi'(t){\rm d}t}{\vphi(t)}\le C_6\int_{R_{n}}^{R_{n+1}}\frac{{\rm d}t}{t}=C_6\log\frac{R_{n+1}}{R_{n}}=o(1),$$
and hence
$$\lim_{n\to 
\infty} \frac{\vphi(R_{n+1})}{ \vphi(R_{n})}=1.
$$
It follows that 
\begin{equation}
\label{Rate2}
r^{\rho-\rho(r)}\left|\sum_{R_n<|z_k|\le r}\frac 1{z_k^\rho}\right|=o(1),\;\;\;{r\to\infty}.
\end{equation}

Finally, summing up (\ref{Rate1}) and (\ref{Rate2}),
 we have almost surely
  \begin{align*}
  \lim_{r\to\infty}r^{\rho-\rho(r)}\left|\sum_{r<|z_k|}\frac 1{z_k^\rho}\right|&\le\lim_{r\to\infty}r^{\rho-\rho(r)}\left(\left|\sum_{R_n<|z_k|}\frac 1{z_k^\rho}\right|+\left|\sum_{R_n<|z_k|\le r}\frac 1{z_k^\rho}\right|\right)= 0.
  \end{align*}
 
 The lemma is proved.}

\end{proof}

{
}
\subsection{}
\begin{proof}[Proof of Theorem \ref{Th2}]

Given a random sequence $\mathcal N_\Lambda$ consider the corresponding Weierstrass canonical product
$$W(z):=\prod_{z_k\in \mathcal N_\Lambda} G\left(\frac{z}{z_k};\lfloor\rho\rfloor\right),$$
where  $G(w;d)$ are the elementary factors:
$$G(w;d)=(1-w)e^{w+\frac{w^2}{2}+\ldots+\frac{w^d}{d}}.
$$
It is well known (see \cite{Levin}, Chapter I), that the zero set of $W$ coincides with the set $\Lambda$.

By Lemma \ref{ln} the sequence $\mathcal N_\Lambda$ almost surely has angular density with index $\rho(r)$, and in case $\rho \notin \mathbb N$  we can apply the Levin-Pfluger theorem I (Appendix, \ref{LP1}). By this theorem, there exists a subset of a complex plane $E=\displaystyle\cup_j D_j$, where  $\{D_j\}_j$ is a set of disks  with zero linear density {(for the definition of this notion see (Appendix, \ref{Levin}))}, such that the following relation holds:
\begin{align}
\label{notN}
\lim_{\substack{r\to\infty,\\re^{it}\notin E}}\frac{\log|W(re^{it})|}{r^{\rho(r)}}=\frac a\rho<1.
\end{align}

In case $\rho\in\mathbb N$ we should take extra care about some kind of symmetry in the distribution of zeros of $W$, which is provided by   Lemma \ref{l1}.
 { By this lemma we know that there exists a (random, finite) number $S:=\sum z_k^{-\rho}$ and the series converges fast enough so that 
$$\delta:= \displaystyle\lim_{r\to \infty} {r^{\rho-\rho(r)}}\cdot{\displaystyle\sum_{|z_k|> r} \frac 1{z_k^\rho}}=0$$
almost surely. Now, consider the entire function
$$\widetilde W(z)=e^{S\cdot z^\rho}\prod G\left(\frac{z}{z_k};\rho\right).$$
By the Levin-Pfluger theorem II (Appendix, \ref{LP2}) there exists a subset of a complex plane $E=\displaystyle\cup_j D_j$, where  $\{D_j\}_j$ is a set of disks  with zero linear density (Appendix, \ref{Levin}), such that the following relation holds:
\begin{align}\label{inN}
\lim_{\substack{r\to\infty,\\re^{it}\notin E}}\frac{\log|\widetilde W(re^{it})|}{r^{\rho(r)}}=\frac a\rho<1.
\end{align}

}

Thus, by (\ref{notN}) and (\ref{inN}), by the maximum principle, with probability one  there exists a nonzero function,   $W\in\mathcal F^p_\varphi$ for $\rho\notin \mathbb N$, and  $\widetilde W\in\mathcal F^p_\varphi$ for $\rho\in \mathbb N$, with zeros exactly at the points of the random sequence $\mathcal N_\Lambda$ (by the construction).
That is,
$\mathcal N_\Lambda$ is almost surely a zero set of $\mathcal F^p_\varphi$.
\end{proof}

\section{Critical density}\label{H}

In this section we produce sequences $\Lambda$ of critical density with respect to the classical Fock space $\mathcal F^p$ such that the random sequence $\mathcal N_\Lambda$ with independent uniformly  distributed arguments is a zero set of $\mathcal F^p$.
\begin{rem}
Note, that  Corollary \ref{nu} provides us with an example of another nature --- a sequence of critical density that is not a zero set under any rotation of the arguments.
\end{rem} 
\begin{theorem}\label{Th3}
Let  $\mathcal N=\left\{z_n=\lambda_n e^{i\theta_n}\right\}$, where  { the sequence $(\lambda_n)$ is  strictly monotone, $\lambda_n\ge 1$ and \begin{equation}
    \label{lambdan}
\lambda_n^2=n+a\sqrt n\cdot\log^bn+o(1),\;\;\;n\to\infty,
\end{equation}} for some $a>0$, $b>3/2,$ and
$\theta_n$ are independent random variables uniformly distributed on $[0,2\pi].$
Then almost surely $\mathcal N$ is  a zero set for the classical Fock space $\mathcal F^p$ with any $p>0$. 
\end{theorem}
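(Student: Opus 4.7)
The plan is to construct an explicit entire function $\widetilde W$ whose zero set is exactly $\mathcal N$ and to show $\widetilde W\in\mathcal F^p$ almost surely. Since $\lambda_n^2\sim n$, the series $\sum 1/|z_n|^3$ converges, so the genus-$2$ canonical product $W(z):=\prod_n G(z/z_n;2)$ is entire and vanishes precisely on $\mathcal N$. The random term $\mathrm{Re}(z^2 S_2/2)$ with $S_2:=\sum_n 1/z_n^2$ implicit in $\log|W|$ would have standard deviation of order $r^2$, destroying any $\mathcal F^p$ bound; however $\sum 1/\lambda_n^4<\infty$, so the Khinchine--Kolmogorov argument of Lemma \ref{l1} gives almost sure convergence of $S_2$, and I remove this term by setting
\[\widetilde W(z):=e^{-S_2 z^2/2}\,W(z).\]
The prefactor is nonvanishing, so $\widetilde W$ is entire with zero set exactly $\mathcal N$.

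Averaging over the angles (equivalently, applying Jensen's formula to $\widetilde W$, which satisfies $\widetilde W(0)=1$),
\[\mathbb E\bigl[\log|\widetilde W(re^{i\theta})|\bigr]=\frac{1}{2\pi}\int_0^{2\pi}\log|\widetilde W(re^{i\theta})|\,d\theta=\int_0^r\frac{n(t)}{t}\,dt=:N(r).\]
Inverting (\ref{lambdan}) yields $n(t)=t^2-a\cdot 2^b\,t\log^b t+o(t\log^b t)$, whence
\[N(r)=\frac{r^2}{2}-c\,r\log^b r+o(r\log^b r),\qquad c:=a\cdot 2^b>0,\]
so the mean (and in fact the $\theta$-average for every sample) lies below the critical profile $r^2/2$ by a gap of order $r\log^b r$. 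What remains is to control the fluctuation of $\log|\widetilde W(re^{i\theta})|$ about $N(r)$.

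For fixed $(r,\theta)$, the difference $\log|\widetilde W(re^{i\theta})|-N(r)$ is a sum of independent centered random variables indexed by $n$. Expanding each summand as a Fourier series in $\theta-\theta_n$, the $k$-th Fourier mode is a random trigonometric expression $A_k\cos(k\theta)+B_k\sin(k\theta)$ whose coefficients $A_k,B_k$ are sums of independent bounded variables; the sum of squares of the $k=1$ coefficients equals $\sum_{\lambda_n<r}\bigl((r/\lambda_n)-(\lambda_n/r)\bigr)^2$ and is of order $r^2\log r$, while the higher modes $k\ge 2$ contribute variances totalling $O(r^2)$. Using the deterministic identity $\sup_\theta|A_k\cos(k\theta)+B_k\sin(k\theta)|=\sqrt{A_k^2+B_k^2}$, subgaussian concentration for $A_k,B_k$, Borel--Cantelli along a sufficiently fine discretisation of $r$, and interpolation via the subharmonicity of $\log|\widetilde W|$ on thin annuli, one expects to obtain almost surely
\[\sup_\theta\bigl|\log|\widetilde W(re^{i\theta})|-N(r)\bigr|\le C\,r(\log r)^{3/2},\qquad r\ge r_0.\]
Making this uniform-in-$(r,\theta)$ deviation estimate rigorous is the main technical obstacle, and the hypothesis $b>3/2$ is built precisely to ensure that the mean gain $r\log^b r$ dominates this fluctuation.

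Combining the mean asymptotics with the fluctuation bound, almost surely
\[\log|\widetilde W(re^{i\theta})|\le\frac{r^2}{2}-\frac{c}{2}\,r\log^b r\]
for all $\theta$ and all sufficiently large $r$. Hence $|\widetilde W(z)|^p e^{-p|z|^2/2}\le\exp(-(pc/2)\,|z|\log^b|z|)$ outside a compact set, which integrates against $|z|\,d|z|\,d\theta$ to a finite value. Thus $\widetilde W\in\mathcal F^p$ almost surely, and $\mathcal N$ is almost surely a zero set of $\mathcal F^p$ for every $p>0$.
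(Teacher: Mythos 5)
Your construction and mean computation match the paper's: expanding the genus-$2$ product and cancelling the quadratic exponential shows that (almost surely, once $S_2=\sum z_n^{-2}$ converges by Khinchine--Kolmogorov) your $\widetilde W$ coincides with the genus-$1$ canonical product $\prod_n(1-z/z_n)e^{z/z_n}$, which is exactly the $W$ the paper works with; and the Jensen-formula identity $\frac{1}{2\pi}\int_0^{2\pi}\log|\widetilde W(re^{i\theta})|\,d\theta=\int_0^r n(t)/t\,dt=\frac{r^2}{2}-c\,r\log^b r\,(1+o(1))$ is also correct (the paper writes this with $\log^b t^2$, equivalent to your $c=a2^b$). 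The genuine gap is the uniform-in-$\theta$ concentration estimate, which is the entire content of the proof. You propose to expand $\log|\widetilde W(re^{i\theta})|-N(r)$ in Fourier modes $A_m\cos m\theta+B_m\sin m\theta$, observe (correctly) that the $m=1$ mode carries variance $\sim r^2\log r$ and the $m\ge 2$ modes carry $\sim r^2/m^3$ each, and then \emph{assert} that subgaussian concentration, Borel--Cantelli along a discretisation, and subharmonic interpolation yield $\sup_\theta|\cdots|\le Cr(\log r)^{3/2}$ almost surely. None of this is carried out, and you say so yourself: ``the main technical obstacle.'' As written this is a sketch of a strategy, not a proof.

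Two concrete issues. First, the exponent $3/2$ appears without derivation: a direct Hoeffding computation on the $m=1$ mode with variance proxy $r^2\log r$, followed by Borel--Cantelli over radii $R_k\sim\sqrt k$, would naively give a fluctuation of order $r\log r$ (and hence the theorem under $b>1$), so either your route actually proves something sharper than you claim, or there is a loss you have not identified; in either case the ``built precisely to ensure'' remark is not backed by an estimate. Second, the passage from finitely many sample points to $\sup_{|z|=r}$ and then to all $r$ is nontrivial and is where the paper spends most of its effort: it splits $W=W_0W_\infty$ by zeros inside/outside $|z|=R_k$ with $R_k=(\lambda_k+\lambda_{k+1})/2$, bounds $|\partial_\theta\log|W_0||$ and $|\partial_\theta\log|W_\infty||$ explicitly in terms of $\eps_k\sim 1/(16k)$, chooses $N_k$ sample points with $\log N_k=o(\log^{2(b-1)}R_k)$, and handles $W_\infty$ by a further dyadic blocking $\sum_m S_{m,k}$. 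Your Fourier-mode alternative would replace all of that machinery, but one still has to control $\sum_{m\ge 2}\sqrt{A_m^2+B_m^2}$ uniformly in $r$ with probabilities summable over $k$ (which forces the cut-off constants to depend on $k$), and the ``interpolation via subharmonicity on thin annuli'' has to be made into an actual inequality; these are not bookkeeping afterthoughts but the crux, and until they are supplied the proposal does not prove the theorem.
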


{
We will need the following technical lemma.
\begin{lemma}\label{lemman(t)}
Under the conditions of the theorem
\begin{enumerate} 
    \item $\lambda_{n+1}^2-\lambda_n^2\to 1,\;\;$ as $n\to\infty$,    \item $\displaystyle \lambda_{n+1}-\lambda_n\sim \frac 1{2\sqrt n},\;\;$ as $n\to\infty$,
    \item $n(t)=t^2-(a+o(1))\cdot t\cdot\log^bt^2,$ as $t\to\infty;$
    \end{enumerate}
\end{lemma}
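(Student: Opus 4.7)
The plan is to treat the three statements in order, deriving each from the defining asymptotic $\lambda_n^2=n+a\sqrt n\cdot\log^bn+o(1)$ by standard smoothing and mean-value arguments, and then inverting this relation to obtain the counting function $n(t)$.

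For part (1), I would write
\[
\lambda_{n+1}^2-\lambda_n^2=1+a\bigl(\sqrt{n+1}\log^b(n+1)-\sqrt n\log^b n\bigr)+o(1),
\]
and apply the mean value theorem to $f(x)=\sqrt x\log^b x$ to see that $f(n+1)-f(n)=O\bigl(\log^b n/\sqrt n\bigr)\to0$; this gives (1). Part (2) then follows instantly from the identity
\[
\lambda_{n+1}-\lambda_n=\frac{\lambda_{n+1}^2-\lambda_n^2}{\lambda_{n+1}+\lambda_n},
\]
since the numerator tends to $1$ by (1) and $\lambda_n+\lambda_{n+1}\sim 2\sqrt n$ because (\ref{lambdan}) forces $\lambda_n\sim\sqrt n$.

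For part (3), I would use strict monotonicity to pick, for each large $t$, the unique index $N=n(t)$ with $\lambda_N<t\le\lambda_{N+1}$. Then $t^2-\lambda_N^2\le\lambda_{N+1}^2-\lambda_N^2=1+o(1)$ by (1), so from (\ref{lambdan})
\[
t^2=N+a\sqrt N\,\log^b N+O(1).
\]
Since this forces $N\sim t^2$, one gets $\sqrt N=(1+o(1))t$ and $\log N=2\log t+o(1)$, whence $\log^b N=(1+o(1))\log^b(t^2)$. Substituting,
\[
t^2-n(t)=t^2-N=a\sqrt N\log^b N+O(1)=(a+o(1))\,t\,\log^b(t^2),
\]
where the $O(1)$ is absorbed by the leading $t\log^b(t^2)$ term. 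This is precisely (3).

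None of the three parts presents a genuine obstacle; the only point requiring care is justifying that the $o(1)$ and $O(1)$ errors in (\ref{lambdan}) do not inflate after we change from the index $n$ to the radius $t$. The estimate $\lambda_{n+1}^2-\lambda_n^2\to 1$ from part (1) is exactly what makes this passage safe, which is why I would establish (1) first and then use it both to control $\lambda_{n+1}-\lambda_n$ in (2) and to bound $t^2-\lambda_{n(t)}^2$ in (3).
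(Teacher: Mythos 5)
Your proposal is correct and follows essentially the same approach as the paper's proof: setting $N=n(t)$, using $\lambda_N<t\le\lambda_{N+1}$ together with the smallness of $\lambda_{N+1}-\lambda_N$ to control $t^2-\lambda_N^2$, and then inverting the asymptotic $\lambda_N^2=N+a\sqrt N\log^b N+o(1)$. The only cosmetic difference is that the paper writes $t=\lambda_n+o(1)$ directly (giving $t^2-\lambda_n^2=o(\sqrt n)$), while you bound $t^2-\lambda_N^2$ by $\lambda_{N+1}^2-\lambda_N^2=1+o(1)$; both error bounds are amply sufficient to be absorbed by $t\log^b t^2$. You also spell out parts (1) and (2) in more detail (mean value theorem, factoring the difference of squares), which the paper simply labels as immediate.
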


\begin{proof}[Proof of Lemma \ref{lemman(t)}.]
 
The first two  claims follow immediately from (\ref{lambdan}).
    
To verify the third claim,    
    note that if $n=n(t)$, then  $\lambda_n<t\le \lambda_{n+1},$ and 
    $$t=\lambda_n+o(1)=\sqrt n+o(\sqrt n),$$ therefore
    \begin{align*}
        \lim_{t\to\infty}\frac{t^2-n(t)}{t\cdot\log^bt^2}=
        \lim_{n\to\infty}\frac{(\lambda_n+o(1))^2-n}{(\sqrt n+o(\sqrt n))\cdot\log^b(\sqrt n+o(\sqrt n))^2}\\=
        \lim_{n\to\infty}\frac{a\sqrt n\cdot\log^bn+o(\sqrt n)}{(\sqrt n+o(\sqrt n))\cdot\log^b(\sqrt n+o(\sqrt n))^2}=a.
    \end{align*}
    The lemma is proved.
\end{proof}

}
\begin{proof}[Proof of Theorem \ref{Th3}]
For $k\in\mathbb N$  denote 
$$H_k(z):=G\left(\frac{z}{z_k};1\right)=\left(1-\frac z{z_k}\right)e^{  z/{z_k}};$$

$$h_k(z):=\log|H_k(z)|=\frac12\log\left(1-2\frac {|z|}{\lambda_k}\cos(\beta-\theta_k)+\frac {|z|^2}{\lambda^2_k}\right)+\frac {|z|}{\lambda_k}\cos(\beta-\theta_k),$$
where $\beta=\arg z.$

Furthermore, for $|z|<|z_k|$
we have
$$h_k(z)={\rm Re}\left(\log\left(1-\frac z{z_k}\right)+\frac z{z_k}\right)=-{\rm Re}\sum_{j\ge 2}\frac {z^j}{jz^j_k}.
$$

We start with a calculation of the expectation of  $h_k(z)$, with $|
z|=R$ and $\arg z=\beta$:
 \begin{multline}\label{E}\mathbb E(h_k(z))= 
 \mathbb E\left(\log\left|1-\frac {R}{\lambda_k}e^{i(\beta-\theta_k)}\right|\right)+
 \mathbb E\left(\frac {R}{\lambda_k}\cos(\beta-\theta_k)\right)\\
 =\frac1{2\pi}\int\limits_{[-\pi,\pi]}\log\left|1-\frac {R}{\lambda_k}e^{i(\beta-\theta_k)}\right|{\rm d}\theta_k+\frac1{2\pi}\int\limits_{[-\pi,\pi]}\frac {R}{\lambda_k}\cos(\beta-\theta_k){\rm d}\theta_k\\=\begin{cases}
0, &R\le \lambda_k,\\
\log {\frac R{\lambda_k}},&R>\lambda_k.
\end{cases}
\end{multline}

\begin{lemma}\label{HoefSeries}
{ Under the conditions of Theorem \ref{Th3}  the function
$\displaystyle\prod_{k=1}^\infty H_k(z)$ is an entire function almost surely. 
}
\end{lemma}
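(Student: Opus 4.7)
The plan is to show that $\sum_k \log H_k(z)$ converges uniformly on every compact subset of $\mathbb C$ almost surely, which will imply that $\prod_k H_k(z)$ defines an entire function. Fix $R>0$ and discard the finite initial segment $\{k:\lambda_k\le 2R\}$. For $|z|\le R$ and $\lambda_k>2R$ one has the absolutely convergent Taylor expansion
\[
\log H_k(z)=\log\Bigl(1-\frac{z}{z_k}\Bigr)+\frac{z}{z_k}=-\sum_{j\ge 2}\frac{z^j}{j\,z_k^j},
\]
so the task reduces to showing that $\sum_k\sum_{j\ge 2}z^j/(jz_k^j)$ converges uniformly on the disk $|z|\le R$.

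Next I would split the double series into its $j=2$ term and the higher-order tail $j\ge 3$. For the tail, the crude bound
\[
\Bigl|\sum_{j\ge 3}\frac{z^j}{j\,z_k^j}\Bigr|\le\frac{2R^3}{\lambda_k^3}
\]
combined with $\lambda_k\sim\sqrt k$, which follows at once from (\ref{lambdan}) (or Lemma \ref{lemman(t)}), gives $\sum_k\lambda_k^{-3}\sim\sum k^{-3/2}<\infty$. Hence the $j\ge 3$ portion is deterministically absolutely and uniformly convergent on $|z|\le R$; no randomness is needed for it.

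The main obstacle is the $j=2$ piece,
\[
-\frac{z^2}{2}\sum_k\frac{e^{-2i\theta_k}}{\lambda_k^2},
\]
where the naive absolute estimate fails because $\sum 1/\lambda_k^2\sim\sum 1/k$ diverges. At this critical density a deterministic genus-$1$ Weierstrass product need not converge at all, and it is precisely here that the random phases will rescue the proof. The summands $e^{-2i\theta_k}/\lambda_k^2$ are independent with mean zero, and the variances of their real and imaginary parts are $1/(2\lambda_k^4)\sim 1/(2k^2)$, which is summable. Applying the Khinchine--Kolmogorov theorem (Appendix, \ref{3s}) separately to real and imaginary parts gives almost sure convergence of the scalar series to a finite complex random number $S$, and multiplying by $z^2/2$ yields uniform convergence on $|z|\le R$ of the $j=2$ piece.

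Combining the two contributions, $\sum_k\log H_k(z)$ converges uniformly on every compact subset of $\mathbb C$ almost surely, so $\prod_k H_k(z)$ is entire almost surely, as claimed. The condition $b>3/2$ on the remainder in (\ref{lambdan}) plays no role in this lemma; it will presumably be needed later to locate the product in $\mathcal F^p$.
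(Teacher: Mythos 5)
Your proposal is correct and is essentially the paper's argument, reorganized. The paper writes $\prod_k G(z/z_k;1)=K(z)\exp(-\sigma z^2)$ where $K(z)=\prod_k G(z/z_k;2)$ is the (deterministically convergent) genus-two product — accounting for the $j\ge 3$ part of your Taylor expansion via $\log|G(z/z_k;2)|\le 2|z/z_k|^3$ and $\sum\lambda_k^{-3}<\infty$ — and $\sigma=\sum z_k^{-2}$ is the almost surely finite random sum obtained, exactly as in your $j=2$ step, from Khinchine--Kolmogorov applied to the mean-zero variables $z_k^{-2}$ with $\operatorname{Var}(z_k^{-2})=\lambda_k^{-4}\sim k^{-2}$. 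Your direct decomposition of the log-series avoids invoking the factorization of the product and is, if anything, slightly cleaner, since the paper's identity implicitly involves regrouping a conditionally convergent product; your remark that $b>3/2$ is irrelevant to this lemma is also accurate.
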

 \begin{proof}
 {
 
 Consider the random variables $z_k^{-2}.$
 Since $\mathbb E(z_k^{-2})=0$ and ${\rm Var}(z_k^{-2})=\lambda_k^{-4}\sim k^{-2},$  by the  Khinchin-Kolmogorov theorem (Appendix, \ref{3s}) it follows that almost surely the series $\sum z_k^{-2}$ converges, so that  $\sigma:=\sum z_k^{-2}$ is a random number which is  almost surely finite.

 Consider an entire function determined by the Weierstrass canonical product $$K(z):=\prod_{z_k\in \mathcal N_\Lambda} G\left(\frac{z}{z_k};2\right).$$
Note, that for all
$k$ such that $|z_k|\ge 2|z|$ we have
 \begin{equation}\label{G} \log\left|G\left(\frac{z}{z_k};2\right)\right|={\rm Re}\left(\log G\left(\frac{z}{z_k};2\right)\right)\le \sum_{j=3}^\infty \left|\frac{z}{z_k}\right|^j\le 2\left|\frac{z}{z_k}\right|^3.
 \end{equation}
 
 Now,  $$\prod_{k=1}^\infty H_k(z) =\prod_{z_k\in \mathcal N_\Lambda} G\left(\frac{z}{z_k};1\right) = K(z)\exp\left(-z^2\sum_k{z_k^{-2}}\right)=K(z)\exp\left(-\sigma z^2\right),
 $$ hence it  is almost surely  an entire function.

The lemma is proved.}
\end{proof}

 Put
$$W(z):=\prod_{k=1}^\infty H_k(z).$$ By Lemma \ref{HoefSeries} it is an entire function with probability one.
We claim that  almost surely 
\begin{equation}
\label{Claim}
\limsup_{R\to\infty}\left(\max_{|z|=R}|W(z)|\right)\exp\left(-\frac{R^2}{2}\right)(R+1)^{3/p}\le 1,
\end{equation}
and, hence, $W\in \mathcal{F}^p$ for $1\le p<\infty$ with probability one.
 
 To prove relation (\ref{Claim}) we will estimate the probability of the large deviation of $W(z)$ from its expected value.
First, by (\ref{E}), the expected value of the random function  $\log|W(z)|$ at the point  $z, \;|z|=R$, is
\begin{align*} 
\mathbb E\left(\log|W(z)|\right)=&\mathbb E\left(\sum_{1}^{\infty}h_k(z)\right)=\sum_{\lambda_k<R} \log\frac R{\lambda_k}=\int_1^R \frac{{n(t)\rm d}t}{t}\\ =&\int_1^R \frac{{(t^2-{(a+o(1))t\log^{b}t^2)}\rm d}t}{t}=\frac{R^2}{2}-\int_1^R{(a+o(1))}\log^{b}t^2{\rm d}t\\
=&\frac{R^2}{2}-{(a+o(1))R\log^{b}R^2.
}\end{align*}

So, for $R$ large enough 
 $$\mathbb E\left(\log|W(z)|\right)\le \frac{R^2}{2}-\frac a2R\log^{b}R^2\le \frac{R^2}{2}-\frac a2R\log^{b}R.
$$

Set
$\displaystyle R_k=\frac{\lambda_k+\lambda_{k+1}}{2}$, $\displaystyle\eps_k~=~\frac{\lambda_{k+1}-\lambda_{k}}{4(\lambda_k+\lambda_{k+1})},\;k\ge 1.$

{ By Lemma \ref{lemman(t)} }\begin{equation}\label{lim_r_n}\displaystyle\lim_{k\to\infty}\frac{k}{R^2_k}=\lim_{k\to\infty}\frac{k}{\lambda^2_k}=16\cdot\lim_{k\to\infty}k\eps_k=1.
\end{equation}

Fix $k\ge 1$ and $z$ such that $|z|=R_k$.
The function $W$ is zero-free in the annulus $(1-\eps_k)R_k\le |z|\le(1+\eps_k) R_k$, and we may represent $W(z)$  as the product of two factors:
$$W_0(z):=\prod_{1\le s\le k} H_s(z);\;\;\;\;\;\;
W_{\infty}(z):=\prod_{s>k} H_s(z).$$

As above, $$\mathbb E\left(\log|W(z)|\right)=\mathbb E\left(\log|W_0(z)|\right)\le \frac{R^2}{2}-\frac a2R\log^{b}R.
$$ 
We will estimate the probability of the events $$\sup\limits_{|z|=R_k} \log|W(z)|\ge \frac{R_k^2}2-\frac{3}{p}\log R_k.$$
As the main tool we will use the 
 Bernstein-Hoeffding  concentration inequality (Appendix, \ref{Hoef}), applying it  to $W_0$ and $W_\infty$ separately.
As the first step we use  the concentration inequality for the value of $\log\left|W_0(z)\right|$ ($\log\left|W_\infty(z)\right|$) at an  individual point $z$ on the circle $|z|=R_k$, then we turn to the estimate of the maximum of  $\log\left|W_0(z)\right|$ ($\log\left|W_\infty(z)\right|$) over a finite number of points on this circle, and finally we prove that  the deviation of  $\log\left|W_0(z)\right|$ ($\log\left|W_\infty(z)\right|$)   from its expectation is small enough with probability close to one on the whole circle $|z|=R_k$.

\subsection{ Concentration inequality for $\log |W_0(z)|$.}

In this section, given $k\ge 1$, we estimate the probability of the large upward deviation of the quantity $\displaystyle\sup_{|z|=R_k}\log|W_0(z)|$ from the expectation of  $\log |W_0(z)|, |z|=R_k$.
 
 \subsubsection{One-point inequalities}

Put $$X_s:=h_s(z)-\mathbb E(h_s(z))=\begin{cases}h_s(z)-\log \frac{|z|}{\lambda_s},&1\le s\le k,\\
h_s(z),&s>k,
\end{cases},$$ 
where $|z|=R_k$.
First we estimate the size of  $X_s$ for $s\le k$:
\begin{align*}|X_s|=\left|h_s(z)-\log \frac{R_k}{\lambda_s}\right|=\left|\log \left|\frac {\lambda_s}{R_k}-e^{i(\beta-\theta_s)}\right|+\frac{R_k}{\lambda_s}\cos(\beta-\theta_s)|\right|\\\le\left|\log \left(1-\frac {\lambda_s}{R_k}\right)\right|+\frac{R_k}{\lambda_s}\le \log \frac1{\eps_k}+\frac{R_k}{\lambda_s}:=c_s.
\end{align*}
Next,
$$\sum_{s=1}^{k} c_s^2\le \sum_{s=1}^{k} 2(\log^2\eps_k +R_k^2/{\lambda_s^2}) = 2k\log^2\eps_k+2 R_k^2\int_1^{R_k}\frac{dn(t)}{t^2}{\le  C{R_k^2}(\log^2\eps_k+\log R_k)}.$$
By the Bernstein-Hoeffding inequality (Appendix, Section \ref{Hoef}) we have 
 \begin{align*}\mathbb P\left(\left|\log |W_0(z)|-\mathbb E\left(\log |W_0(z)|\right)\right|\ge \frac{a}{16}R_k\log^{b}R_k \right)\\
 \le \exp\left(-\frac{a^2 R_k^2\log^{2b}R_k}{C_1R_k^2(\log^2\eps_k+\log R_k)}\right)\\
\le \exp\left(-\frac{ a^2 \log^{2b}R_k}{C_1(\log^2\eps_k+\log R_k)}\right).
\end{align*}
Therefore, using relation (\ref{lim_r_n}), we conclude that
\begin{align*}\log\mathbb P\left(\left|\log |W_0(z)|-\mathbb E\left(\log |W_0(z)|\right)\right|\ge \frac{a}{16}R_k\log^{b}R_k \right)\\
 \lesssim -\frac{\log^{2b}R_k}{(\log^2\eps_k+\log R_k)}\lesssim -\log^{2(b-1)}R_k.
\end{align*}

\subsubsection{Multi-point inequalities}

Consider now $N_k$  equidistant points on the circle $|z|=R_k$ (the number $N_k$ is to be defined later):
$$\zeta_s:=R_k\exp\left(2\pi i \frac s{N_k}\right),\;\;s=0,\dots,N_k-1.$$
For each of the points $\zeta_s$ we have
$$\log\mathbb P\left\{\log|W_0(\zeta_k)|\ge \frac{R_k^2}2-\frac
{7a}{16}R_k\log^{b}R_k\right\}
\lesssim - \log^{2(b-1)}R_k.$$
Since
\begin{multline*}
    \mathbb P\left\{\sup_{0\le s< N_k}\log|W_0(\zeta_s)|\ge  \frac{R_k^2}2-\frac
{7a}{16}R_k\log^{b}R_k\right\}
\\\le
\sum_{0\le s< N_k}\mathbb P\left\{\log|W_0(\zeta_s)|\ge \frac{R_k^2}2-\frac
{7a}{16}R_k\log^{b}R_k\right\},
\end{multline*}
we obtain
$$\log\mathbb P\left\{\sup_{0\le s< N_k}\log|W_0(\zeta_s)|\ge  \frac{R_k^2}2-\frac
{7a}{16}R_k\log^{b}R_k\right\}
\lesssim- \log^{2(b-1)}R_k,$$
if $\log N_k=o(\log^{2(b-1)}R_k)$.
\subsubsection{Sup--inequalities}\label{Sup}

Now we are ready  to estimate the supremum of $\log |W_0(z)|$ on the whole circle $|z|=R_k$.
For $1\le s\le k$ we have
$$1<\lambda_s\le \lambda_k<R_k(1-\eps_k),
$$
and, hence,
\begin{align*}
\left|\frac{\partial}{\partial \beta}\log |W_0(R_ke^{i\beta})|\right|=\left|\sum_{s=1}^{k}\left(\frac{\frac{R_k}{\lambda_s}\sin(\beta-\theta_s)}{1+R_k^2/\lambda_s^2-2\frac{R_k}{\lambda_s}\cos(\beta-\theta_s)}-\frac{R_k}{\lambda_s}\sin(\beta-\theta_s)\right)\right|
\\\le
\sum_{s=1}^{k}\frac{R_k}{\lambda_s}
\left(\frac1{(\frac{R_k}{\lambda_s}-1)^2}+1\right)\le
k R_k
\left(\frac{(1-\eps_k)^2}{\eps_k^2}+1\right)\le  k R_k\frac 2{\eps_k^2}.
\end{align*}

Taking  $N_k=k^3$ and using (\ref{lim_r_n}) we obtain for $k>K,$ where $K$ is some deterministic constant,
$$\frac {2\pi}{N_k} \cdot\frac{2k R_k}{\eps_k^2}= {4\pi} \cdot\frac{ R_k}{(k\eps_k)^2}\le 15 R_k<\frac {a}{16}R_k\log^{b}R_k.
$$
Since
\begin{multline*}
\mathbb P\left\{\sup_{|z|=R_k}\log|W_0(z)|\ge  \frac{R_k^2}2-\frac
{6a}{16}R_k\log^{b}R_k\right\}\\
\le
\mathbb P\left\{\sup_{0\le s<N_k}\log|W_0(\zeta_s)|\ge\frac  {R_k^2}2-\frac
{6a}{16}R_k\log^{b}R_k-\frac {2\pi}{N_k} \cdot\frac{2k R_k}{\eps_k^2}\right\}\\
\le
\mathbb P\left\{\sup_{0\le s< N_k}\log|W_0(\zeta_s)|\ge  \frac{R_k^2}2-\frac
{7a}{16}R_k\log^{b}R_k\right\},
\end{multline*}
we get
\begin{equation}
\label{W0}
\log\mathbb P\left\{\sup_{|z|=R_k}\log|W_0(z)|\ge  \frac{R_k^2}2-\frac
{6a}{16}R_k\log^{b}R_k\right\}\lesssim-\log^{2(b-1)}R_k.
\end{equation}

\subsection{ Estimating  $\log |W_\infty(z)|$.}

We still keep the number $k\ge 1$ fixed. Our next step is to estimate  the probability that $\displaystyle\sup_{|z|=R_k}\log|W_\infty(z)|$ is "too large". 
 
\subsubsection{One-point inequalities}\label{One-point}

Consider a point $z$ on the circle $|z|=R_k$.
By Lemma \ref{HoefSeries} the series $\sum_{s=k}^\infty h_s(z)$ converges almost surely, let regroup it in the following way
$$\sum_{s>k} h_s(z)=\displaystyle\sum_{m=0}^\infty S_{m,k},\;\;\;{\rm where}\;\;\;S_{m,k}=\sum_{s=k2^m+1}^{k2^{m+1}} h_s(z),
$$
and estimate the probability of large deviation from zero of each $S_{m,k},$ $m=0,1,\ldots$.
Since 
\begin{multline*}
|h_s(z)|
=\left|-{\rm Re}\sum_{m\ge 2}\frac{z^m}{mz_s^m}\right|\le\frac{R_k^2}{2\lambda_s^2}\left(1+\sum_{m\ge 1}\frac{|z^m|}{m|z_s^m|}\right)
\le C \frac{R_k^2}{s}\log \frac1{\varepsilon_k} \le C\frac{R_k^2\log R_k}{s}=:c_{k,s},
\end{multline*}
where  $C$ is a deterministic constant, and
$$\sum_{s=k2^m+1}^{k2^{m+1}}c_{k,s}^2\le \frac{C_1R_k^4\log^2R_k}{k2^{m+1}},$${
using  the Bernstein-Hoeffding inequality (Appendix, \ref{Hoef}) we get
$$P_{m,k}:=\mathbb P \left(\left|S_{m,k}\right|\ge\frac{6}{\pi^2}\cdot\frac {aR_k\log^{b}R_k}{8\cdot(m+1)^2} \right)
\le
\exp\left(-C_2\frac{ 2^{m+1}}{(m+1)^4\displaystyle}\log^{2(b-1)}R_k\right),
$$
where $C_1, C_2$ are some deterministic constants.
 Hence,
 $$\mathbb P \left(\left|\sum_{s>k}h_s(z)\right|\ge\frac {a}{8} R_k\log^{b}R_k\right)\le 
\sum_{m=0}^\infty P_{k,m}\lesssim \exp\left(-C_3\log^{2(b-1)}R_k\right), $$
and finally 
$$\log\mathbb P \left(\log |W_\infty(z)|\ge\frac {a}8 R_k\log^{b}R_k\right)\lesssim -\log^{2(b-1)}R_k. $$
}

\subsubsection{Multi-point inequalities}

Consider $N_k$ equidistant points on the circle $|z|=R_k$:
$$\zeta_s:=R_k\exp\left(2\pi i \frac s{N_k}\right), \;\;\;0\le s<N_k.$$
For each of these points we have 
$$\log\mathbb P \left(\log |W_\infty(\zeta_s)|\ge\frac {a}8 R_k\log^{b}R_k\right)\lesssim -{\log^{2(b-1)}R_k}, $$
hence
$$\log\mathbb P \left(\sup_{0<s\le N_k}\left |W_\infty(\zeta_s)\right|\ge\frac {a}8 R_k\log^{b}R_k\right)\lesssim -\log^{2(b-1)}R_k$$
if $\log N_k=o(\log^{2(b-1)}R_k)$.

\subsubsection{Sup--inequalities}
Here we estimate the supremum of $\log |W_\infty(z)|$ on the whole circle $|z|=R_k$.   
We represent the angular derivative of $h_s$, $s>k$, in the  form
$$\frac{\partial}{\partial \varphi}h_s(z)=-{\rm Re}\sum_{l=2}^\infty \frac {iz^l}{z_s^l}= -{\rm Re} \left(\frac {iz^2}{z_s^2}+ \frac{iz^3}{z_s^3}\cdot\frac{z_s}{z_s-z}\right).
$$ 
 Therefore, (here $z=R_ke^{i\beta}$)
\begin{align*}
\left|\frac{\partial}{\partial \beta}\log |W_\infty(R_ke^{i\beta})|\right|\le&
\left|\sum_{s>k}\frac{z^2}{z_s^2}\right|+\sum_{s>k}\left|\frac{z^3}{z_s^3}\cdot\frac{z_s}{z_s-z}\right| \le
R_k^2
\left|\sum_{s>k}\frac{1}{z_s^2}\right|+R_k^3\sum_{s>k}\frac{1}{\lambda_s^2(\lambda_s-R_k)}\\
=&
R_k^2
\left|\sum_{s>k}\frac{1}{z_s^2}\right|+R_k^3\int_{(1+\eps_k) R_k}^\infty \frac{{\rm d}n(t)}{t^2(t-R_k)}\le 
R_k^2
\left|\sum_{s>k}\frac{1}{z_s^2}\right|+CR_k^2\log\left(1+\frac 1{\eps_k}\right).
\end{align*}

Now we estimate the probability of the event $\displaystyle\left|\sum_{s>k}\frac{1}{z_s^2}\right|>1.$ 
Using  the Bernstein-Hoeffding inequality (Appendix, \ref{Hoef}), we have, by the same way as  in the Section \ref{One-point},
\begin{multline*}\mathbb P \left(\left|\sum_{s>k}z_s^{-2}\right|\ge 1\right)\le 
\sum_{m=0}^\infty  
\mathbb P \left(\left|\sum_{s=k2^m+1}^{k2^{m+1}}z_s^{-2}\right|\ge\frac{6}{\pi^2(m+1)^2}\right)\\\le\sum_{m=0}^\infty  
\exp\left(-\frac{C_1k2^{m+1}}{(m+1)^4}\right)\le \exp(-C_2k),\end{multline*} where $C_1, C_2$ are some deterministic constants. 
Taking $N_k=k$ we obtain  for $k>K$, where $K$ is some deterministic constant, that  $$\log\mathbb P \left(\frac{2\pi}{N_k}\sup\limits_{|z|=R_k}\left|\frac{\partial}{\partial \beta}\log |W_\infty(z)|\right|\ge \frac {a}8 R_k\log^{b}R_k \right)\lesssim -k.
$$
Now, since 
\begin{multline*}
\mathbb P\left\{\sup_{|z|=R_k}\log|W_\infty(z)|\ge \frac {a}4 R_k\log^{b}R_k\right\}\\
\le
\mathbb P\left\{\sup_{0\le s<N_k}\log|W_\infty(\zeta_s)|+
\frac{2\pi}{N_k}\sup\limits_{|z|=R_k}\left|\frac{\partial}{\partial \beta}\log |W_\infty(z)|\right|
\ge \frac {a}4 R_k\log^{b}R_k\right\}\\
\le
\mathbb P\left\{\sup_{0\le s<N_k}\log|W_\infty(\zeta_s)|\ge \frac {a}8 R_k\log^{b}R_k\right\}\\+
\mathbb P\left\{\frac{2\pi}{N_k}
\sup\limits_{|z|=R_k}\left|\frac{\partial}{\partial \beta}\log |W_\infty(z)|\right|\ge \frac {a}8 R_k\log^{b}R_k\right\}
,\end{multline*}
for $k>K$ we get
\begin{equation}
\label{W_inf}
\mathbb \log\; {\mathbb P}\left\{\sup_{|z|=R_k}\log|W_\infty(z)|\ge \frac {a}4 R_k\log^{b}R_k\right\}\lesssim
- \log^{2(b-1)}R_k.
\end{equation}

Finally, by inequalities (\ref{W0}) and (\ref{W_inf}), there exists some determenistic constant $K_1$ such that  for $k>K_1$  we get
\begin{align}\label{-log}
\log\mathbb P\left\{\sup_{|z|=R_k}\log|W(z)|\ge \frac{R_k^2}2-\frac{a}8 R_k\log^{b}R_k\right\}
\lesssim
-\log^{2(b-1)}R_k.
\end{align}

\subsection{The end of the proof}
Given $C>0$, there exists $M\in\mathbb N$ such that for $k\ge M$ we have
$${\exp\left(-C{ \log^{2(b-1)}R_k}\right)\lesssim R_k^{-4}\lesssim k^{-2}}.$$
Hence, by the Borel-Cantelli lemma  
it follows from (\ref{-log}) that with probability one for all but a finite number of $k$ we have
\begin{equation}\label{*}\sup_{|z|=R_k}\log|W(z)|<  \frac{R_k^2}2-\frac {a}8 R_k \log^{b}R_k.
\end{equation}
From now on we assume that (\ref{*}) holds for $k\ge k_0.$

Now let $R_{k-1}\le R\le R_{k}, \;\;k\ge k_0.$ 
Then for every $z$  on the circle $|z|=R$ almost surely $$\sup_{|z|=R}\log|W(z)|\le \sup_{|z|=R_k}\log|W(z)|\le \frac{R_k^2}2-\frac a8 R_k \log^b R_{k}.$$
Recall that $R_k\sim\sqrt{k}$ and, { by  Lemma \ref{lemman(t)}},
$R_k-R_{k-1}\sim\displaystyle \frac{C}{\sqrt k}$ as $k\to\infty.
$
Hence, almost surely, for large $R
$  we have 
\begin{align*}\sup_{|z|=R}\log|W(z)|&<  \frac12\left(R_{k-1}+\frac{C_1}{\sqrt k}\right)^2-\frac a8R_k\log^b R_{k}\\&= \frac{R_{k-1}^2}2+C_1\frac{R_{k-1}}{\sqrt k}+\frac {C_1^2}{2k}-\frac a8R_k\log^b R_{k}\\&\le \frac{R^2}2-\frac3p\log (R+1).
\end{align*}
Thus, with probability one we have
$$\|W\|_{\mathcal F^p}=\int_{\mathbb C}|W(z)|^p\exp\left({-p\frac{|z|^2}{2}}\right){\rm d}m(z)\le C\int_{\mathbb C} \left(|z|+1\right)^{-3}{\rm d}m(z)<\infty.$$
\end{proof}

\section{Appendix}

Here we recall some notions and results which we have used in this note. 



\subsection{Bernstein-Hoeffding concentration inequality {\rm \cite[Theorem 2.2.6]{Versh}}}\label{Hoef}
{\it  
Let $X_{1},X_{2},\dots ,X_{N}$ be independent random variables such that
${\displaystyle a_{i}\le X_{i}\leq b_{i}}$  for every $i$.
Then, for any $t>0$ we have
$$
\displaystyle \mathbb P \left(\sum_{n=1}^N \left(X_n-\mathbb E(X_{n})\right)\ge t\right)\le \exp \left(-{\frac {2t^{2}}{\sum _{i=1}^{N}(b_{i}-a_i)^{2}}}\right).
$$

{
\subsection{Khinchine-Kolmogorov theorem on convergence of random series {\rm \cite[Theorem 22.6]{Bill}}}\label{3s}

{\it Let $(Y_k)_{k\in\mathbb N}$ be a sequence of independent real-valued random variables  with zero expectation. 
If 
$$\displaystyle\sum {\rm Var}\left(Y_k\right)<\infty,$$ then
the series $\displaystyle\sum Y_k$ convergence a.s.
}

\subsection{Kolmogorov maximal inequality {\rm \cite[Theorem 22.4]{Bill}}}\label{KolmMax}

{\it Let $Y_1, Y_2,\ldots, Y_N$ be a sequence of independent real-valued random variables  with zero expectation and finite variances. For $\gamma>0$
$$\mathbb P\left(
\max\limits_{1\le k\le N} \left|\sum_{m=1}^{k} Y_m\right|\ge \gamma
\right)\\\le \frac 1{\gamma^2}\sum_{m=1}^N{\rm Var}(Y_m).
$$

}}}

\subsection{Weyl-type criterion of uniform distribution of random sequence {\rm \cite[Theorem 2]{Hol}}}\label{Holew}
{\it  
Let $X_{1},X_{2},\ldots $ be a sequence of  independent real-valued random variables with characteristic functions $\phi_1,\phi_2,\ldots$. Then this sequence is  uniformly distrubuted modulo 1 almost surely, that is 
$$\lim_{N\to\infty} \frac{\#\{k: X_k-\lfloor X_k\rfloor<x\}}N=x,\;\;\;\forall x\in [0,1)
$$
with probability one,
if and only if
for every $k\in\mathbb N$
$$\lim_{N\to\infty}\frac1{N}{\sum_{n=1}^N\phi_n(2\pi k )}=0.
$$
}

\subsection{Definition  of a set of disks with zero linear density {\rm \cite[Chapter II, \S 1]{Levin}}\label{Levin}}
{\it A set $D$ of disks $D_j$  in the complex plane is said to have  zero linear density, if 
$$\lim_{r\to\infty}\frac 1r\sum_{|c_j|<r} \lambda_j=0,
$$
where $\lambda_j$ is the radius  and $c_j$ is the center of  $D_j$.
}

\subsection{Levin-Pfluger theorem I {\rm \cite[Chapter II, \S 1, Theorem 1]{Levin}}}\label{LP1}
{\it Let a discrete set $\mathcal N\subset \mathbb C$ have an angular density of index $ \rho(r)$, where $\rho(r)$ is a proximate order with $\rho=\lim_{r\to\infty}\rho(r)\notin\mathbb N$. Let  $\Delta$ be a nondecreasing function such that for all but a countable set of angles
$$\Delta(\beta)-\Delta(\alpha)=\lim_{t\to\infty}\frac{n(t,\alpha,\beta)}{t^{\rho(t)}}.$$ 
Then for $z\in\mathbb C\setminus E$, where $E$ is a set of disks with zero linear density, the canonical product
$$W(z)=\prod_k G\left(\frac{z}{z_k};\lfloor\rho\rfloor\right)$$
satisfies the asymptotic relation
$$\lim_{r\to\infty}\frac{\log|W(re^{i\theta})|}{r^{\rho(r)}}=\frac{\pi}{\sin(\pi\rho)}\int_{\theta-2\pi}^{\theta}\cos\left(\rho(\theta-\psi-\pi)\right){\rm d}\Delta(\psi).$$}

\subsection{Levin-Pfluger theorem II {\rm \cite[Chapter II, \S 1, Theorem 2]{Levin}}}\label{LP2}
{\it Let a set $\mathcal N\subset \mathbb C$ have an angular density of index $\mathcal \rho(r)$, where $\rho(r)$ is a proximate order with $\rho=\lim_{r\to\infty}\rho(r)\in\mathbb N$. Let  $\Delta$ be a nondecreasing function such that for all but a countable set of angles
$$\Delta(\beta)-\Delta(\alpha)=\lim_{t\to\infty}\frac{n(t,\alpha,\beta)}{t^{\rho(t)}}.$$
 { Let the following limits exist and be finite
 $$S:=\sum z_n^{-\rho},
 $$
 $$\delta:=\lim_{r\to\infty}r^{\rho-\rho(r)}\sum_{|z_n|> r} \frac 1{ z_n^{\rho}}.$$
Then for $z\in\mathbb C\setminus E$, where $E$ is a set of disks with zero linear density, the entire function $$W(z)=e^{S\cdot z^{\rho}}\prod_k G\left(\frac{z}{z_k};\rho\right)$$
satisfies the following asymptotic relation
$$\lim_{r\to\infty}\frac{\log|W(re^{i\theta})|}{r^{\rho(r)}}=-\int_{\theta-2\pi}^{\theta}(\psi-\theta)\sin\left(\rho(\psi-\theta)\right){\rm d}\Delta(\psi) +\frac{|\delta|}{\rho}	\cos\left(\rho(\theta-\arg \delta)\right).$$}
}
%
%
\medskip

\medskip

\medskip

\medskip

\bigskip
\bigskip
\bigskip

\begin{thebibliography}{A}
\bibitem{Bill}
Billingsley, Patrick. Probability and Measures. John Wiley \& Sons, 2013.


\bibitem{Bomash}
Bomash, Gregory. "A Blaschke-type product and random zero sets for Bergman spaces." Arkiv för Matematik 30, no. 1 (1992): 45-60.

\bibitem{CLP}
Chistyakov, Gennadiy, Yurii Lyubarskii, and  Leonid Pastur. "On completeness of random exponentials in the Bargmann–Fock space." Journal of Mathematical Physics 42, no. 8 (2001): 3754-3768.


\bibitem{Cochran}
Cochran, W. George. "Random Blaschke products." Transactions of the American Mathematical Society 322, no. 2 (1990): 731-755.




\bibitem{FT} 
Fang, Xiang, and Pham Trong Tien. "A sufficient condition for random zero sets of Fock spaces." Archiv der Mathematik 117, no. 3 (2021): 291-304.

\bibitem{Hol}
Holewijn, Piet J. "Note on Weyl's
criterion and the uniform distribution of independent random variables." The Annals of Mathematical Statistics 40, 1124-1125 (1969).

\bibitem{Horowitz'74} Horowitz, Charles. "Zeros of functions in the Bergman spaces." Duke Mathematical Journal 41, no. 4 (1974): 693-710.

\bibitem{Horowitz'94}
Horowitz, Charles. "Some conditions on Bergman space zero sets." Journal d’Analyse Mathematique 62, no. 1 (1994): 323-348.

\bibitem{Horowitz'95}
Horowitz, Charles. "Zero sets and radial zero sets in function spaces." Journal d’Analyse Mathematique 65, no. 1 (1995): 145-159.

\bibitem{LeBlanc}
LeBlanc, Emile. "A probabilistic zero set condition for the Bergman space." Michigan Mathematical Journal 37, no. 3 (1990): 427-438.

\bibitem{Levin} Levin, Boris Yakovlevich. Distribution of zeros of entire functions, American Mathematical Soc., 1980.


\bibitem{LyonsZhai}
Lyons, Russell, and Alex Zhai. "Zero sets for spaces of analytic functions." In Annales de l'Institut Fourier, vol. 68, no. 6, pp. 2311-2328. 2018.


\bibitem{Rudowicz} 
Rudowicz, Rafał. "Random sequences interpolating with probability one." Bulletin of the London Mathematical Society 26, no. 2 (1994): 160-164.


\bibitem{Versh}
Vershynin, Roman. High-dimensional probability: An introduction with applications in data science. Vol. 47. Cambridge university press, 2018.
\bibitem{Zhu}
Zhu, Kehe. Analysis on Fock spaces. Vol. 263. Springer Science \& Business Media, 2012.
\end{thebibliography}
\end{document}